\documentclass[11pt]{amsart}
\usepackage{amsmath,amssymb,amsthm}
\usepackage{array,booktabs,graphicx}
\usepackage{xcolor}
\usepackage[colorlinks=true,linkcolor=blue,citecolor=blue,urlcolor=blue]{hyperref}
\usepackage{comment}
\usepackage{tikz-cd}

\theoremstyle{plain}
\newtheorem{theorem}{Theorem}[section]
\newtheorem{lemma}[theorem]{Lemma}
\newtheorem{proposition}[theorem]{Proposition}
\newtheorem{corollary}[theorem]{Corollary}
\newtheorem{conjecture}[theorem]{Conjecture}

\theoremstyle{definition}
\newtheorem{definition}[theorem]{Definition}
\newtheorem{example}[theorem]{Example}

\theoremstyle{remark}
\newtheorem{remark}[theorem]{Remark}

\title[New points on covers of elliptic curves]{New algebraic points on
covers of elliptic curves}

\author{Diana Mocanu}
\address{Max Planck Institute for Mathematics, Bonn,
Germany}
\email{d.mocanu@mpim-bonn.mpg.de}

\author{George C. \c Turca\c s}
\address{Babe\c s-Bolyai
University, Cluj-Napoca, Romania}
\email{george.turcas@ubbcluj.ro}

\date{\today}
\subjclass[2020]{Primary 11G30; Secondary 11R45, 14H30}
\keywords{new algebraic points, low degree points, covers of elliptic
curves, discriminant divisors, primitive number fields, modular curves}

\newcommand{\Q}{\mathbb{Q}}
\newcommand{\Qbar}{\overline{\mathbb{Q}}}
\newcommand{\GL}{\mathrm{GL}}
\newcommand{\Z}{\mathbb{Z}}
\newcommand{\PP}{\mathbb{P}}
\newcommand{\FF}{\mathbb{F}}
\newcommand{\Gal}{\operatorname{Gal}}
\newcommand{\dd}{\mathfrak{d}}
\newcommand{\DD}{\mathfrak{D}}
\newcommand{\news}{\mathrm{new}}
\newcommand{\OO}{\mathcal{O}}
\newcommand{\Spec}{\operatorname{Spec}}
\newcommand{\supp}{\operatorname{supp}}
\newcommand{\Disc}{\operatorname{Disc}}
\newcommand{\ord}{\operatorname{ord}}
\newcommand{\Frob}{\operatorname{Frob}}

\DeclareMathOperator{\Tr}{\text{Tr}}

\DeclareMathOperator{\End}{\text{End}}

\begin{document}
\begin{abstract}
    For a smooth projective curve $C/\Q$ of genus $\geq 2$ and $L/\Q$ an  extension, we write
$C(L)_{\text{new}}=\{P\in C(L):\Q(P)=L\}$. Recent work of
Khawaja and Siksek conjectures that this set is empty for $100\%$ of
degree $n$ number fields $L$, when ordered by absolute discriminant. Moreover, they bring evidence towards this conjecture when $C$ is a degree $n$ cover of $\PP^1$. We complement their work by proving analogous results for degree $n$ covers $\psi:C\to E$ of elliptic curves $E$. 

Our main result shows that, under suitable hypotheses, the number of distinct absolute discriminants at most $X$ of primitive degree $n$ fields $L$ with $C(L)_{\text{new}}\neq\varnothing$ is $O(X^{1/2})$ or $O(X/(\log X)^{\alpha})$, for some $\alpha>0$. In degrees $2,3,4$ and $5$ we show that these fields have density $0$ among all fields of the same degree (in degree $4$, also among the primitive ones). The novelty is for degrees $4$ and $5$, where we use work of Bhargava--Shankar--Wang and McGown--Thorne--Tucker to count fields with specified local constraints.

Moreover, we give concrete examples of $8$ bielliptic modular curves $X_0(N)$, for which $X_0(N)(L)_{\text{new}}=\emptyset$ 
for $100 \%$ of quadratic fields $L$. Lastly, we point out modular covers of degrees $3$ and $5$ in the LMFDB for which similar conclusions hold.
\end{abstract}
\maketitle
\section{Introduction}
Let $C/\Q$ be a nice (i.e. smooth, projective, absolutely irreducible) curve of genus at
least $2$. By Faltings' Theorem \cite{Fal}, $C$ has finitely many points over any
fixed number field, but as the field varies infinitely many points can
appear, and one may ask over which fields they first appear. Fix an
algebraic closure $\Qbar$ of $\Q$.
We write $\Q(P)$ for the smallest
field over which $P$ is defined. We say $P$ is \emph{primitive} whenever $\Q(P)/\Q$ has no intermediate subextension.
For a number field $L/\Q$, write
$$C(L)_{\news} = \{ P \in C(L): \Q(P) = L \}$$
for the set of \emph{new} points over $L$: those defined over $L$ and
over no proper subfield. We say that $L$ \emph{supports a new point} if
$C(L)_{\news}\neq\varnothing$. For a curve $C/K$ over a number field $K$ and an extension $L/K$, we use the same notation relative to $K$: $C(L)_{\news}=\{P\in C(L): K(P)=L\}$, and a closed point is primitive if $K(P)/K$ has no proper intermediate field.  In~\cite{KS}, Khawaja and Siksek pose the following conjecture, which we state for the family of degree $n$ field extensions.

\begin{conjecture}[Khawaja--Siksek {\cite[Conjecture~3]{KS}}]\label{conjecture}
Let \(K\) be a number field, and \(C/K\) be a nice curve of genus at least \(2\). Fix an integer $n\geq 2$. Then
\(
C(L)_{\news}=\varnothing
\)
for \(100\%\) of \(L/K\) of degree $n$, ordered by their absolute
discriminants.
\end{conjecture}
 Note that a geometric
source may produce infinitely many degree $n$ points, and even infinitely
many fields of definition, while the set of those fields has density $0$
among all degree $n$ fields. The conjecture is a statistical analogue of the
Diophantine stability studied by Mazur and Rubin \cite{MR}.

 Work of
Harris and Silverman~\cite{HS} proves that a smooth
projective curve $C/\mathbb Q$ of genus at least $2$ with infinitely
many quadratic points is either hyperelliptic or bielliptic. 
More recently, Khawaja and Siksek~\cite{KSsingle} prove that when 
$g(C)>(n-1)^2$ for $n\geq3$ (or $g(C)\geq3$
for $n=2$) and the Jacobian of $C$
is simple or has finite Mordell--Weil group, an infinite source of primitive degree $n$ points comes from covers of $\PP^1$.
A generalisation due to Derickx~\cite[Corollary 1.4 (1)]{Deric} gives the following.
\begin{theorem}\label{thm:Deric}
    Let $K$ be a number field, \(C/K\) a smooth projective geometrically integral curve, and
let \(n\geq 2\). Suppose that \(C\) has infinitely many primitive
closed points of degree \(n\) and that
\[
g(C)>(2n-1)^2.
\]
Then there is a smooth projective geometrically integral curve \(Y/K\)
with \(Y(K)\) infinite, and an indecomposable morphism
\(\psi:C\to Y\) of degree \(n\), such that all but finitely many
primitive degree \(n\) points of \(C\) are of the form
\(\psi^*(y)\) for some \(y\in Y(K)\).
\end{theorem}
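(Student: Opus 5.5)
This is Derickx's result, so the plan is to reproduce the standard argument: the Abramovich--Harris/Frey strategy via Faltings' theorem on subvarieties of abelian varieties, together with Castelnuovo--Severi to force a genuine map.

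\textbf{Step 1: from points to a positive-dimensional family.} Each degree $n$ closed point $P$ gives an effective $K$-rational divisor $D_P$ of degree $n$, so a $K$-point of $C^{(n)}$. Fix a base point (a rational degree $n$ divisor class) and map $C^{(n)}\to \mathrm{Pic}^n_C\cong J$ by $D\mapsto [D]$; let $W=W_n(C)$ be the image.

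\textbf{Step 2: apply Faltings.} The infinitely many points lie in $W(K)$. By Faltings' theorem on subvarieties of abelian varieties, $W(K)$ is a finite union of translates $x_i+B_i(K)$ with $B_i$ abelian subvarieties contained in $W$ after translation. There are two cases.
\begin{itemize}
\item If infinitely many $D_P$ map into a single point of $W$, then they lie in a single complete linear system $|D|$ with $\dim|D|\ge1$.
\item Otherwise some translate $x+B\subset W$ with $\dim B\ge1$ contains infinitely many of the classes.
\end{itemize}
The genus bound $g>(2n-1)^2$ is used to exclude positive-dimensional abelian varieties in $W_n$ that do not come from maps. The route is Abramovich--Harris style (or Debarre--Fahlaoui): if $x+B\subset W_n$, then $x+B+B\subset W_{2n}$. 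Also, if $W_{d}$ contains an abelian variety of dimension $r$ and $d$ is small relative to $g$, then there exist a curve $Y$ and a map $C\to Y$ of degree $\le d/r$ or similar. I expect this to be the main obstacle: extracting from "abelian variety in $W_n$" a morphism $C\to Y$ of degree dividing $n$, with the quadratic genus bound coming from Castelnuovo--Severi applied to two competing maps.

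\textbf{Step 3: construct the map and use primitivity.} In the linear-system case, $|D|$ restricted to a pencil through infinitely many $D_P$ gives a map $C\to\PP^1$ of degree $\le n$. In the abelian case, the family $\{D: [D]\in x+B\}$ together with Castelnuovo--Severi shows that all these divisors are fibres of a single morphism $\psi:C\to Y$. Here $Y$ is a curve with $Y(K)$ infinite, via Faltings applied to $Y$ again: $Y$ has genus $\le1$, and is $\PP^1$ or elliptic with positive rank. A fibre $\psi^*(y)$ of degree $n$ equals $D_P$ only if $\deg\psi$ divides $n$. If $\deg\psi=m<n$, then $D_P=\psi^*(E)$ with $E$ of degree $n/m$, so $K(P)\supseteq K(\psi(P))$ with $[K(\psi(P)):K]=n/m$. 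This is a proper intermediate field, contradicting primitivity unless $n/m\in\{1,n\}$. The case $m=1$ is impossible, since $g(C)\geq 2>g(Y)$, and $m=n$ gives $\deg\psi=n$ with $y=\psi(P)\in Y(K)$.

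\textbf{Step 4: indecomposability and finiteness.} If $\psi$ factors as $C\to Z\to Y$ nontrivially, then $P$ maps to a point of $Z$ of degree strictly between $1$ and $n$, which again contradicts primitivity; so $\psi$ can be taken indecomposable. Finally, there are only finitely many translates from Faltings, each contributing a single map. Castelnuovo--Severi with $g>(2n-1)^2$ then shows that two distinct degree $n$ maps cannot both carry infinitely many points, so they collapse to one $\psi$. The remaining exceptional points form finite sets, so all but finitely many primitive degree $n$ points are of the form $\psi^*(y)$ with $y\in Y(K)$.
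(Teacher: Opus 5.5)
The paper does not prove this statement. It is quoted from Derickx \cite[Corollary~1.4(1)]{Deric}, so your sketch has to stand on its own.

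Its outer shell is sound:
Faltings' theorem on $W_n(C)$ with the single-class/translate dichotomy;
primitivity of $K(P)$ to force $\deg\psi=n$ with $D_P=\psi^*(y)$, and to exclude factorisations;
Castelnuovo--Severi for uniqueness (two indecomposable degree-$n$ maps to curves of genus at most $1$ coincide up to an isomorphism of targets once $g>n^2+1$).

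But the step that carries the content of the theorem is missing. When infinitely many classes $[D_P]$ lie in a translate $x+B\subseteq W_n$ with $\dim B\geq1$, you must produce a morphism whose fibres are the $D_P$. You leave this as the ``main obstacle''. The tool you reach for, that an abelian variety in $W_d$ forces a cover $C\to Y$ of small degree, is essentially the Abramovich--Harris conjecture, which is false in general (Debarre--Fahlaoui). Even where such a cover exists, it would not show that these particular divisors $D_P$ are its fibres. Nor can Castelnuovo--Severi be applied to ``the family $\{D:[D]\in x+B\}$'': it needs two morphisms as input and says nothing about a family of divisors not yet known to be fibres. As written, the hypothesis $g>(2n-1)^2$ is never actually used; only the weaker $g>n^2+1$ enters, for uniqueness.

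The shape of the bound suggests where the missing argument lives. $(2n-1)^2$ is exactly the Castelnuovo--Severi bound for two maps $C\to\PP^1$ of degree $2n$, and such degree-$2n$ data come from the Frey trick. Your ``$x+B+B\subset W_{2n}$'' should read $2x+B\subseteq W_{2n}$. The useful form is this: take $x=[D_{P_0}]$ and use $B=-B$. For each $P$ with $[D_P]=x+b$, the $K$-rational class $x-b=[2D_{P_0}-D_P]$ is effective. Hence $D_P\le F$ for some $F\in|2D_{P_0}|(K)$, where $\dim|2D_{P_0}|\geq\dim B\geq1$. What is still needed is an argument that extracts from such data a single degree-$n$ morphism having the $D_P$ as full fibres over $K$-points. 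Presumably this combines Castelnuovo--Severi for pairs of maps of degree at most $2n$ with primitivity. Your sketch contains nothing of this kind.

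There is also a smaller, repairable gap in the single-class case. If $r=\dim|D|\geq2$, infinitely many $K$-points of $|D|\simeq\PP^r$ need not lie on one pencil. The fix runs as follows.
\begin{enumerate}
\item The base locus is empty: it is defined over $K$ and contained in every $D_P$, and these are pairwise distinct closed points.
\item If $\phi_{|D|}$ were birational onto its image, Castelnuovo's bound would give $g\le(n-1)(n-2)/2$, contradicting the genus hypothesis.
\item Otherwise $\phi_{|D|}$ has degree $m\geq2$ onto a nondegenerate curve of degree $n/m$. Then $D_P$ is the pullback of a closed point of degree $n/m$ on (the normalisation of) the image, lying under $P$.
\item Primitivity forces $n/m=1$, i.e.\ the image is a line, contradicting $r\geq2$.
\end{enumerate}
Hence $r=1$, and $|D|$ is a base-point-free pencil of degree $n$.
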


Thus, if the genus is large enough, the only possible infinite primitive sources of points on a nice curve $C/\Q$
are degree $n$ maps to $\PP^1$ or to a positive rank elliptic curve. For similar results on lower genus in degrees
$4$ and $5$, when the degree is the least one occurring
infinitely often, see Kadets--Vogt
\cite[Theorems~1.2 and~1.3]{KadetsVogt}. 

In recent work, 
Khawaja and Siksek~\cite[Theorem~9]{KS}
give evidence towards Conjecture~\ref{conjecture} in the case $\psi:C\to \PP^1$, assuming certain conditions on the squarefree part of the
discriminant of the polynomial defining this cover. 
More precisely, they prove that the norms of discriminants of the primitive degree $n$ fields supporting new points form a set of density $0$ in the positive integers.

We study the analogous elliptic curve setting, which is the first proposed project in \textit{loc. cit.}
We consider a finite morphism
\[
\psi\colon C\longrightarrow E
\]
of degree $n$, where $E/\mathbb Q$ is an elliptic curve. When
$E(\mathbb Q)$ has positive rank, the rational fibres of $\psi$ may
give rise to infinitely many degree $n$ fields. 
 
The projective line argument in \textit{loc. cit.} relies on using an affine coordinate $T$, which turns rational fibres into one-variable polynomial specialisations at $T=t$. This allows their discriminants to be tracked through discriminants of the one variable polynomials in question. For an elliptic curve there is no analogous preferred global coordinate, so this description cannot be replicated. We therefore replace the polynomial discriminant by the discriminant divisor
of the finite cover $\psi:C \to E$ denoted by $D_\psi$. This is defined from the trace pairing on $\psi_*\mathcal O_C$. Its odd part, $B_{\psi}$, plays the role of the squarefree factor appearing in the $\PP^1$ setting; see Section~\ref{sec:language} for the precise definitions. 

\subsection{Our results}
Before we present our main result, we introduce some notation. 
A number field $L/\Q$ is \emph{primitive} if it has no proper
intermediate field. A closed point $P$ is \emph{primitive}
if $\Q(P)/\Q$ is primitive.

A permutation of a finite set is a
\emph{derangement} if it has no fixed point. We say that a group action
\emph{contains a derangement} if some element of its image is a
derangement. 

\begin{theorem}\label{thm:main}
Let $n\geq2$, and let $\psi:C\to E$ be a finite morphism of degree
$n$ from a smooth projective absolutely irreducible curve $C/\Q$ of
genus $g$ to an elliptic curve $E/\Q$. Suppose that
$g>n^2+1$
and either $B_\psi=\varnothing$, or the action of
$\Gal(\Qbar/\Q)$ on $B_\psi(\Qbar)$ contains a derangement. For $X\geq2$, let
\[
\mathcal D_{C,n}(X)
=
\left\{
|\Delta_L|\leq X:
[L:\Q]=n,\ L/\Q\text{ primitive}, 
C(L)_{\news}\neq\varnothing\right\}.
\]
Then the following hold.
\begin{enumerate}
\item If $B_\psi=\varnothing$, then
$\#\mathcal D_{C,n}(X)\ll_\psi X^{1/2}$.
\item If $B_\psi\neq\varnothing$, let $L_\psi/\Q$ be the finite
Galois extension cut out by the action of $\Gal(\Qbar/\Q)$ on
$B_\psi(\Qbar)$. Write
$
G_\psi=\Gal(L_\psi/\Q)$
and
$$
 \alpha_\psi
 :=
 \frac{
 \#\{\sigma\in G_\psi:B_\psi(\Qbar)^\sigma=\varnothing\}
 }{\#G_\psi}.$$
Then $0<\alpha_\psi<1$ and
 $\#\mathcal D_{C,n}(X)
 \ll_\psi
 \frac{X}{(\log X)^{\alpha_\psi}}$.
\end{enumerate} 
\end{theorem}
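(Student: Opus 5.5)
Since $g>n^2+1$ exceeds the thresholds in Theorem~\ref{thm:Deric}-type results, we reduce first to the points coming from the cover. We write a new point $P\in C(L)_{\news}$ with $L$ primitive of degree $n$. Then $\psi(P)\in E(\Q(\psi(P)))$ and $\Q(\psi(P))\subseteq L$, so by primitivity either $\psi(P)\in E(\Q)$ or $\Q(\psi(P))=L$. In the second case $P$ determines a degree $n$ point of $C$ that is not a pullback from $E(\Q)$. We will use the genus bound (via Castelnuovo--Severi applied to the map $C\to C^{(n)}$ and the morphism $\psi$) to show that only finitely many such primitive points exist outside the fibres $\psi^*(E(\Q))$. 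This contributes $O(1)$ discriminants. We may therefore assume $L=\Q(P)$ with $P\in\psi^{-1}(t)$ for some $t\in E(\Q)$ whose fibre is irreducible of degree $n$, so that $L\cong \OO_{C,\psi^{-1}(t)}\otimes\kappa(t)$ is the fibre algebra.

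The key local input is a comparison of $\Delta_L$ with the discriminant divisor. We use a model of $\psi$ over $\Z[1/N]$ for a fixed $N$ depending only on $\psi$ (good reduction of $C$ and $E$, and flatness of $\psi_*\OO_C$). For $p\nmid N$, the fibre algebra over $t$ reduced mod $p$ has discriminant ideal equal to the pullback $t^*D_\psi$, i.e.\ $v_p(\Disc(\text{fibre order}))$ equals the local intersection number $(t\cdot D_\psi)_p$. Moreover, $\Delta_L$ differs from this by a square factor, since the discriminant of an order is the field discriminant times an index squared. Hence, away from $N$, the odd-exponent primes of $\Delta_L$ are exactly the primes $p$ at which $t$ meets $B_\psi$ with odd multiplicity. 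In particular, $p\,\|\, \Delta_L$ in the squarefree-part sense forces $\bar t\in B_\psi(\FF_p)$.

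In case (1) this says that $\Delta_L=\pm m\cdot s^2$ with $m\mid N^\infty$ of bounded odd part. Bounding $v_p(\Delta_L)$ for $p\mid N$ by a constant depending on $n$, we get $m$ in a finite set. The number of integers $\leq X$ of that shape is $\ll X^{1/2}$. In case (2), every prime $p\nmid N$ dividing the squarefree part of $\Delta_L$ has $B_\psi(\FF_p)\neq\varnothing$. For unramified $p$, this means $\Frob_p$ fixes some point of $B_\psi(\Qbar)$. Thus $|\Delta_L|$ lies in the set of integers whose squarefree kernel, after removing primes dividing $N$, is supported on primes $p$ with $\Frob_p$ in the complement $S$ of the derangement set. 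By Chebotarev, $S$ has density $1-\alpha_\psi$. The Landau--Wirsing/Selberg--Delange sieve bound counts integers $m\le X$ of the form $a\,b\,s^2$, with $a\mid N^\infty$ bounded and $b$ supported on such primes, as $\ll X/(\log X)^{\alpha_\psi}$. Summing over $s$ contributes only a convergent factor $\sum s^{-2}$ when done carefully, splitting $s\le (\log X)^{A}$ against the trivial bound. That $\alpha_\psi>0$ is the derangement hypothesis; $\alpha_\psi<1$ because the identity fixes everything and $B_\psi\ne\varnothing$.

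The main obstacle is the local comparison at $p\nmid N$: showing that the reduction of the integral point $t$ meeting $B_\psi$ with odd intersection multiplicity is exactly what controls the odd part of $v_p(\Delta_L)$. This requires a careful description of $D_\psi$ as a Cartier divisor $\det(\text{trace form})$ on the integral model of $E$. One must check that pulling back along the section $t:\Spec\Z_{(p)}\to\mathcal E$ gives the discriminant of the fibre order, and handle $t$ reducing into the support of $D_\psi$ with even multiplicity from the even part. The first reduction, finiteness of non-pullback primitive points, I expect to be routine given the genus hypothesis.
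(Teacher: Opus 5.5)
Your local analysis matches the paper's. That is: an integral model, the identity $v_p(\text{disc of the fibre order})=i_{s_t(p)}(s_t(U),D_\Psi)$, the index--discriminant formula, and Chebotar\"ev applied to the finite \'etale closure of $B_\psi$. Your Wirsing/Selberg--Delange count of integers $b s^2$ is a valid self-contained substitute for citing \cite[Theorem~10]{KS}. The gap is the first reduction, which you call routine but which is the one deep arithmetic input of the proof. You need that
\[
\Xi_n(\psi)=\{P\in C(\Qbar):[\Q(P):\Q]=n,\ \Q(\psi(P))=\Q(P)\}
\]
is finite, and your justification does not give this.

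\begin{itemize}
\item The claim that $g>n^2+1$ exceeds the threshold of Theorem~\ref{thm:Deric} is false: $(2n-1)^2-(n^2+1)=n(3n-4)>0$ for every $n\geq 2$. Derickx's theorem, even combined with Castelnuovo--Severi to identify its map with $\psi$, would only work under $g>(2n-1)^2$.
\item Castelnuovo--Severi concerns two morphisms from $C$ to curves, and $C^{(n)}$ is not a curve. Being purely geometric, it cannot by itself bound a set of algebraic points. Its bound $g\leq 2n+(n-1)^2=n^2+1$ coincides numerically with the hypothesis, but the paper remarks that this is not used in the proof.
\end{itemize}

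The missing ingredient is the Vojta-type finiteness theorem of Song--Tucker (Theorem~\ref{thm:ST}). For a dominant $f:C\to C'$ with $g(C)-1>(\nu+g(C')-1)\deg f$, only finitely many $P$ with $[\Q(P):\Q]=\nu$ satisfy $\Q(f(P))=\Q(P)$. Taking $f=\psi$, $\nu=n$ and $g(C')=1$, this hypothesis is exactly $g>n^2+1$; that is where the genus bound in the statement comes from.

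With this input your argument goes through. The fields $\Q(P)$ for $P\in\Xi_n(\psi)$ give only finitely many exceptional discriminants. For every other primitive $L$ with a new point $P$, primitivity forces $\psi(P)\in E(\Q)$, and the fibre above $\psi(P)$ is $\Spec L$.

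For your ``exactly the primes where $t$ meets $B_\psi$ with odd multiplicity'' step, you also need the model chosen as in the paper's Lemma~\ref{lem:spread}. There $D_\Psi$ has no vertical components away from $N$, and the closures of the points of $D_\psi$ are finite \'etale and pairwise disjoint.
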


The idea of the proof is as follows. Firstly, we show that, apart from finitely many exceptions, primitive fields $L$ supporting new points must satisfy $v_p(\Delta_L)\equiv0\pmod 2$ for all $p$ in a set $\mathcal S$ of primes of regular density $\alpha>0$ (Theorem~\ref{thm:parity}). This uses Song--Tucker's finiteness theorem \cite[Proposition~2.3]{ST}, a spreading out argument for the cover $\psi$ and, when $B_\psi\neq\varnothing$, and a version of Chebotar\"ev's Density Theorem (Lemma~\ref{lem:avoid}). Then we count the integers $m\leq X$ with $v_p(m)\equiv0\pmod 2$ for all $p\in\mathcal S$: directly when $B_\psi=\varnothing$, and using \cite[Theorem~10]{KS} otherwise.

\begin{theorem}
\label{thm:field density}
Under the hypotheses and notation of Theorem~\ref{thm:main}, the
following hold.
\begin{enumerate}
\item If $n\in\{2,3,5\}$, the degree $n$ fields $L/\Q$ for
which
$C(L)_{\news}\neq\varnothing$
have density $0$ among all degree $n$ fields ordered by absolute
discriminant.

\item If $n=4$, the primitive quartic fields $L/\Q$ for which
$C(L)_{\news}\neq~\varnothing$
have density $0$ both among all quartic fields and among the primitive
quartic fields, in each case ordered by absolute discriminant.
\end{enumerate}
\end{theorem}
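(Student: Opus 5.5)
The strategy is to combine the parity constraint behind Theorem~\ref{thm:main} with known asymptotics for fields of degree $n\le5$ that have prescribed local behaviour at finitely many primes. By the parity result (Theorem~\ref{thm:parity}), there is a set $\mathcal S$ of primes of positive regular density $\alpha>0$ with the following property: apart from finitely many exceptions, every primitive degree $n$ field $L$ with $C(L)_{\news}\neq\varnothing$ satisfies $v_p(\Delta_L)\equiv 0\pmod 2$ for all $p\in\mathcal S$. Since $\sum_{p\in\mathcal S}1/p$ diverges, it suffices to show the following. For every finite set $T\subset\mathcal S$, the degree $n$ fields with $v_p(\Delta_L)$ even for all $p\in T$ make up a proportion at most $\prod_{p\in T}(1-c_p)$ of all degree $n$ fields. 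Here $c_p\gg 1/p$ is the local mass of \'etale algebras over $\Q_p$ with odd discriminant valuation. Letting $T$ grow then drives the upper density to $0$. Note that counting distinct discriminants is not enough, because several fields can share a discriminant; we must count the fields themselves, with the local condition imposed field by field.

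First I treat the primitivity issue for each $n$. If $n\in\{2,3,5\}$, every degree $n$ field is primitive, since $n$ is prime. So every $L$ with $C(L)_{\news}\neq\varnothing$ falls under Theorem~\ref{thm:parity}. For $n=4$ we restrict to primitive quartic fields. Imprimitive quartics, those containing a quadratic subfield, form a positive proportion of all quartic fields: the $D_4$ fields make up a positive proportion by Bhargava. So density $0$ among all quartics implies density $0$ among the primitive ones, once we know that primitive quartics have positive density. That holds because $S_4$-quartics make up a positive proportion.

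Next comes the local counting, done separately in each degree. For $n=2$ it is elementary. Quadratic discriminants are fundamental discriminants, and $v_p(\Delta_L)$ is even exactly when $p$ is unramified, so the condition is just that $p\nmid\Delta_L$ for $p\in T$. A squarefree sieve gives proportion $\prod_{p\in T}(1-1/(p+1))\to0$. For $n=3$, Davenport--Heilbronn with local conditions (Bhargava--Shankar--Tsimerman, or Taniguchi--Thorne) shows that the number of cubic fields with specified \'etale algebras at finitely many primes is asymptotic to $c X$ times the product of the local Bhargava masses. Having odd $v_p$ means, for $p\ne2,3$, tame ramification of type $(1^2 1)$. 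This has relative mass about $1/p$ and keeps the product tending to $0$. For $n=4$ and $n=5$, I would invoke the Bhargava--Shankar--Wang counts of $S_4$- and $S_5$-fields with finitely many local specifications, made uniform enough by McGown--Thorne--Tucker. These give the same product formula over the Bhargava masses, with a positive mass of odd discriminant valuation at each $p\in T$; for instance, for $p>5$ the splitting type $(1^2 1^{n-2})$ has mass $\asymp 1/p$. In degree $5$, non-$S_5$ quintic fields have density zero, so this accounts for all quintics. In degree $4$ we work with $S_4$-quartics, since $A_4$ and the other primitive cases are negligible, and then compare with the total quartic count.

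I expect the main obstacle to be the degree $4$ and $5$ step. There one needs the local-condition asymptotics for arbitrarily large finite $T$, with the main term exactly equal to the product of local masses. One also needs to check, from the explicit mass formulas of Bhargava, that the mass with odd $v_p(\Delta)$ is bounded below by a constant times $1/p$. Once that lower bound holds, $\prod_{p\in T}(1-c_p)\to0$ follows from the divergence of $\sum_{p\in\mathcal S}1/p$, which in turn follows from the positive density of $\mathcal S$.
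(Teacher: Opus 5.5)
Your proposal is correct and follows essentially the paper's argument: Theorem~\ref{thm:parity} supplies a regular set $\mathcal S$. For each fixed finite $T\subseteq\mathcal S$, one applies local-condition field counts whose main term is the product of local mass ratios: cubic counts as in \cite[Corollary~18]{KS}, \cite[Theorem~6]{MTT23} for quintics, and \cite[Theorem~2]{BSWglobal} for $S_4$-quartics with $A_4$-quartics negligible. One then lets $T$ grow, using $\sum_{p\in\mathcal S}1/p=\infty$.

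The only minor deviations are these. For $n=2$ the paper simply combines Theorem~\ref{thm:main} with the fact that at most two quadratic fields share an absolute discriminant, which gives the sharper bound $X/(\log X)^{\alpha_\psi}$ instead of your sieve's $o(X)$. Also, no uniformity in $T$ is needed, since $T$ is fixed before letting $X\to\infty$.
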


The degree $4$ and $5$ cases rest on two statements about number
fields which may be of independent
interest (Theorems~\ref{thm:qqext-quintic-parity}
and~\ref{thm:qqext-quartic-parity}). Suppose $\mathcal S$ is a regular set of
primes of positive density. Then the density of quintic fields, respectively the
primitive quartic fields, whose discriminant has even valuation at every
$p\in\mathcal S$ is $0$. 
For counting quintic fields we use work of McGown--Thorne--Tucker
\cite[Theorem~6]{MTT23}, and for primitive quartic fields we use Bhargava--Shankar--Wang \cite[Theorem~2]{BSWglobal}, together with the bound for $A_4$-quartic fields in \cite[Theorem~1.4]{BSTTTZ}.

This extends the cubic result
of \cite[Corollary~18]{KS}, answering the second project in
\textit{loc. cit.} We also obtain the
following complement to Theorem~9 in \textit{loc. cit.}
\begin{corollary}\label{cor:P1}
Let $n\in\{4,5\}$, and let $\psi:C\to\PP^1$ be a morphism of degree
$n$ satisfying the hypotheses of \cite[Theorem~9]{KS}: in the notation of
Proposition~\ref{prop:KS-dictionary}, $g(C)>n^2-n+1$, $\deg g$ is
even, and $\Gal(g)$ contains a derangement (i.e. an element acting freely on the roots of $g$). 
\begin{itemize}
    \item If $n=5$, the quintic
fields $L$ with $C(L)_{\news}\neq\varnothing$ have density $0$ among
all quintic fields.
\item If $n=4$, the primitive quartic fields $L$ with
$C(L)_{\news}\neq\varnothing$ have density $0$ both among all quartic
fields and among the primitive quartic fields.
\end{itemize}

\end{corollary}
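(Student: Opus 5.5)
We will reduce Corollary~\ref{cor:P1} to the two field-counting results, Theorems~\ref{thm:qqext-quintic-parity} and~\ref{thm:qqext-quartic-parity}. The parity input comes from the argument behind \cite[Theorem~9]{KS}, rephrased through Proposition~\ref{prop:KS-dictionary}.

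The first step is to extract from \cite[Theorem~9]{KS} a parity statement, not only its density-zero conclusion for discriminant norms. Write the cover as $\psi:C\to\PP^1$, given by a polynomial $F(T,x)$ whose discriminant in $x$ is $c\,g(T)h(T)^2$ with $g$ squarefree. The hypotheses are $g(C)>n^2-n+1$, $\deg g$ even, and $\Gal(g)$ containing a derangement. Under these, the proof in \emph{loc.\ cit.} shows three things.
\begin{itemize}
\item[(i)] By the Song--Tucker finiteness argument, all but finitely many primitive degree $n$ fields $L$ with $C(L)_{\news}\neq\varnothing$ are of the form $\Q(\psi^{-1}(t))$ with $t\in\PP^1(\Q)$.
\item[(ii)] Using that $\deg g$ is even (so $\infty$ is not a branch point of odd multiplicity), one gets $v_p(\Delta_L)\equiv v_p(g(t))\equiv 0\pmod 2$ for every prime $p$ outside a finite set at which $g$ has no root modulo $p$.
\item[(iii)] By Chebotar\"ev applied to the splitting field of $g$ (in the spirit of Lemma~\ref{lem:avoid}), the derangement hypothesis makes the set $\mathcal S$ of such primes regular of positive density $\alpha_g$.
\end{itemize}
This is the exact analogue of Theorem~\ref{thm:parity}, with $g$ in place of $B_\psi$. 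Proposition~\ref{prop:KS-dictionary} presumably identifies the roots of $g$ with the odd part of the discriminant divisor, so we can quote the output directly. The conclusion is: apart from finitely many exceptions, every primitive degree $n$ field supporting a new point has $v_p(\Delta_L)$ even for all $p\in\mathcal S$.

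The second step is the case $n=5$. Every quintic field is primitive, because $5$ is prime. So the quintic fields with $C(L)_{\news}\neq\varnothing$ lie, up to finitely many exceptions, in the set of quintic fields with $v_p(\Delta_L)$ even for all $p\in\mathcal S$. Theorem~\ref{thm:qqext-quintic-parity} says this set has density $0$ among quintic fields, and adding finitely many fields does not change a density.

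The third step is the case $n=4$. We apply the same reasoning, restricted to primitive quartic fields, and invoke Theorem~\ref{thm:qqext-quartic-parity} to get density $0$ among primitive quartic fields. To pass to density among all quartic fields, we use that the primitive quartic fields ($S_4$ and $A_4$) have positive proportion among all quartic fields, by Bhargava's count together with the $D_4$ asymptotics. A density-zero subset of a positive-proportion subfamily is then density zero in the whole family. Alternatively, this may already be part of the statement of Theorem~\ref{thm:qqext-quartic-parity}.

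The main obstacle is the first step. \cite[Theorem~9]{KS} is stated as a density statement about discriminant norms, so we need its proof to yield the sharper parity condition on the fields themselves. Concretely, we must check that the local valuation comparison $v_p(\Delta_L)\equiv v_p(g(t))$ holds for all large $p$, uniformly in $t$, including the case where $t$ reduces to $\infty$ modulo $p$. This is where the hypothesis that $\deg g$ is even enters. We would handle it by homogenising $g$ to a binary form of even degree, so that its value at $(a:b)$ is well-defined modulo squares.
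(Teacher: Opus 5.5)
Your proposal is correct. Its skeleton is the paper's: a parity statement on a regular set of primes of positive density, followed by Theorems~\ref{thm:qqext-quintic-parity} and~\ref{thm:qqext-quartic-parity}. The difference is where the parity input comes from. You plan to reopen the proof of \cite[Theorem~9]{KS} and redo it with the polynomial $g$. The paper instead applies the final clause of its own Theorem~\ref{thm:parity}, which allows a target of any genus $g_0$. For $g_0=0$ its hypothesis $g(C)-1>n(n-1)$ is exactly $g(C)>n^2-n+1$. Proposition~\ref{prop:KS-dictionary}(3) then converts ``$\deg g$ even and $\Gal(g)$ contains a derangement'' into the derangement hypothesis on $B_\psi(\Qbar)$, with $B_\psi=\varnothing$ if $g$ is constant. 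In that coordinate-free argument $\infty$ is just a point of $\PP^1$ outside $B_\psi$, so the obstacle you flag never arises and nothing in \cite{KS} needs re-verifying. Your coordinate route also works, and is easier than you fear. If $L=\Q(\psi^{-1}(t))$ with $a_n(t)\Delta_Y(F)(t)\neq0$ (all but finitely many $t$), then $F(t,Y)$ is, up to a scalar, the minimal polynomial of a generator of $L$. Hence $\Delta_Y(F)(t)\in\Delta_L\Q^{\times2}$, and $v_p(\Delta_L)\equiv v_p(g(t))\pmod 2$ at \emph{every} prime, not just large ones. Writing $t=a/b$ in lowest terms and $G$ for the homogenisation of $g$, evenness of $\deg g$ gives $v_p(g(t))\equiv v_p(G(a,b))\pmod 2$. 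This is $0$ for every $p$ outside a fixed finite set at which $G$ has no zero in $\PP^1(\FF_p)$. Lastly, for $n=4$ you do not need the positive proportion of primitive quartics. Theorem~\ref{thm:qqext-quartic-parity} already gives $o(X)$, and density $0$ in a subfamily implies density $0$ in any larger family.
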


In Section~\ref{sec:modular-quadratic}, we apply our main theorem for the class of bielliptic modular curves and we get the following. 

\begin{corollary}\label{cor:X0}
Let
$
N\in\{53,61,65,79,83,89,101,131\}.
$
Then
\[
X_0(N)(L)_{\news}=\varnothing
\]
for $100\%$ of quadratic fields $L/\Q$, when ordered by absolute
discriminant.  More precisely, there exists
$\alpha_N\in(0,1)$ such that
\[
\#\left\{
L/\Q\text{ quadratic}:
|\Delta_L|\leq X,\ 
X_0(N)(L)_{\news}\neq\varnothing
\right\}
\ll_N
\frac{X}{(\log X)^{\alpha_N}}.
\]
\end{corollary}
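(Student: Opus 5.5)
To prove Corollary~\ref{cor:X0}, I will apply Theorem~\ref{thm:main} with $n=2$ to the bielliptic quotient maps. For each $N$ in the list, $X_0(N)$ is bielliptic: there is an involution $w$ (an Atkin--Lehner involution $w_d$, or for prime $N$ the Fricke involution $w_N$) with $E:=X_0(N)/\langle w\rangle$ an elliptic curve over $\Q$. The resulting double cover $\psi:X_0(N)\to E$ has degree $2$.

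Every quadratic field is primitive, since it has no proper intermediate field. So $\mathcal D_{C,2}(X)$ is the set of discriminants $|\Delta_L|\le X$ of quadratic fields $L$ that support a new point. A quadratic field is determined by its discriminant, and at most two quadratic fields share the same value of $|\Delta_L|$. Hence the count in the corollary is at most $2\#\mathcal D_{C,2}(X)$, and the bound $X/(\log X)^{\alpha_N}$ follows from Theorem~\ref{thm:main}(2) with $\alpha_N=\alpha_\psi$. Since the number of quadratic fields with $|\Delta_L|\le X$ is $\asymp X$, this bound also gives the $100\%$ statement.

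It remains to check the hypotheses of Theorem~\ref{thm:main}, and this is the main work.

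\textbf{Genus.} We need $g(X_0(N))>n^2+1=5$, that is, $g\ge 6$. I will check this from the standard genus tables. For example, $g(X_0(53))=4$ would fail, so I must be careful here. If some listed $N$ has genus at most $5$, I expect the condition on $g$ is used in the paper only to invoke Song--Tucker finiteness of non-pullback quadratic points. In that case I would replace it for those $N$ by the known explicit classification of quadratic points on $X_0(N)$ (work of Box, Najman--Vukorepa and others), which says that all but finitely many quadratic points are pullbacks $\psi^*(y)$ with $y\in E(\Q)$. Finitely many exceptional fields do not affect the asymptotic, so this substitution suffices.

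\textbf{Branch divisor.} Next I determine $B_\psi$. For a double cover, the discriminant divisor is the branch divisor, namely the images of the fixed points of $w$, each with multiplicity one, so $B_\psi$ is this branch divisor. By Riemann--Hurwitz the number of branch points is $2g-2$, which is nonzero, so $B_\psi\neq\varnothing$. The fixed points of $w_N$ are CM points: the Heegner points of discriminant $-4N$ (and $-N$ when $N\equiv 3\pmod 4$), defined over ring class fields. The Galois action on $B_\psi(\Qbar)$ therefore factors through the Galois group of a ring class field of $\Q(\sqrt{-N})$, or of the relevant order. Complex conjugation, or any element not fixing $\Q(\sqrt{-N})$, sends a CM point with CM by an order in $\Q(\sqrt{-N})$ to its conjugate. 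I expect such an element to act without fixed points, since a fixed Heegner point would have to be real, and that can be ruled out for each $N$.

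The cleanest choice is an element $\sigma$ acting nontrivially on $\Q(\sqrt{-N})$ composed with a nontrivial class group element, so that no Heegner point is fixed. To make this rigorous, I will explicitly compute an equation $y^2=f(x)$ for the double cover, for instance from the models in the LMFDB or in Bars' list of bielliptic $X_0(N)$. The branch locus is then the roots of $f$ on $E$. I will verify with Magma that $\Gal$ of the defining polynomial contains a derangement on those roots, for example by exhibiting a prime $p$ for which the factorisation modulo $p$ has no degree-one factor, and then apply Chebotarev.

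The main obstacle is this case-by-case derangement verification together with the genus issue for small-genus $N$. I expect the explicit computation to handle both.
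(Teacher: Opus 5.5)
Your reduction matches the paper's: you apply Theorem~\ref{thm:main}(2) to $\psi_N\colon X_0(N)\to X_0^+(N)$, use that at most two quadratic fields share a value of $|\Delta_L|$, and for the low-genus levels (they are $53,61,65$, of genera $4,4,5$) replace Song--Tucker by Box's classification, as in Remark~\ref{rem:Xi-finite}. The gap is in the derangement step, where your proposed elements lie in the wrong coset of $G_K$, with $K=\Q(\sqrt{-N})$.

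Complex conjugation is never a derangement. The principal class has real $j$-invariant (e.g.\ $j(\sqrt{-N})\in\mathbb R$), so a real fixed point of $w_N$ always exists and cannot be ruled out. More generally, identify $S_D$ with $\operatorname{Pic}(\mathcal O_D)$ as a torsor. Then $\Gal(H_D/K)$ acts by translations and complex conjugation acts by inversion. So every element not fixing $K$ acts as $x\mapsto c\,x^{-1}$ for some class $c$, and it has a fixed point exactly when $c$ is a square. For $N=79,83,131$ we have $N\equiv3\pmod4$ and $(h(-N),h(-4N))=(5,5),(3,9),(5,15)$, all odd. Hence every such element fixes exactly one point of $S_{-N}$, and your ``cleanest choice'' gives no derangement at these three levels. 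At $N=53,61,65,89,101$ it does work if $c$ is chosen to be a non-square, which exists by genus theory, but you would have to say so.

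The missing idea is to take the derangement inside $G_K$. The group $\Gal(H_D/K)\simeq\operatorname{Pic}(\mathcal O_D)$ acts simply transitively on $S_D$, so any lift of a non-trivial element is fixed-point-free. Such an element exists because $h(D)>1$, which you check against the class number one list. For $N\equiv3\pmod4$ you need freeness on $S_{-N}\sqcup S_{-4N}$ simultaneously. The paper gets this from $H_{-N}\subseteq H_{-4N}$, by choosing $\tau\in\Gal(H_{-4N}/K)$ whose restriction to $H_{-N}$ is non-trivial.

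Your Magma fallback could in principle replace this argument: a prime $p$ of good reduction at which the branch scheme has no $\FF_p$-point certifies a Frobenius derangement. As written, though, it is only a promise. It would also have to be set up on $E$: the branch points are the odd-order zeros and poles of $\delta\in\Q(E)$, where $\Q(X_0(N))=\Q(E)(\sqrt\delta)$ (Example~\ref{ex:double-cover}), not the roots of a univariate $f$ in a model $y^2=f(x)$.
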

For double covers, $B_\psi$ is the branch divisor (Example~\ref{ex:double-cover}), and all the modular curves above are bielliptic via the
Atkin--Lehner quotient $X_0(N)\to X_0^+(N)$ (i.e. $X_0^+(N)$ is an elliptic curve). It is well known that its branch points correspond to CM elliptic curves, and then classical CM theory supplies
the required Galois derangement. The genus hypothesis holds except
at levels $53$, $61$ and $65$, where Box's results
\cite[\S\S~4.2, 4.3 and 4.5]{Box} and
Remark~\ref{rem:Xi-finite} give the conclusion.

Earlier work \cite{Box,NajmanVukorepa} classifies these quadratic
points geometrically; Corollary~\ref{cor:X0} complements this by
counting their fields of definition by discriminant.

Modular curves also provide covers of degrees $3$ and $5$ to which
Theorem~\ref{thm:main} applies; two examples taken from the LMFDB
\cite{LMFDB} are described in Remark~\ref{rem:higher-degree modular}.
Together with Corollary~\ref{cor:X0}, this contributes to the third
project of \cite[Section~8]{KS}.

\subsection{Organisation of the paper} In
Section~\ref{sec:language}, we define the discriminant divisor of the cover and its odd
locus. Moreover, we use Chebotar{\"e}v's Theorem to find primes at which the odd locus has no rational point. In Section~\ref{sec:parity} we introduce the algebraic geometry machinery needed to prove Theorem~\ref{thm:main}, which concludes the section.
Section~\ref{sec:qqext} establishes the field counting
results in degrees $4$ and $5$ and proves
Theorem~\ref{thm:field density} and Corollary~\ref{cor:P1}.
Section~\ref{sec:modular-quadratic} proves the application to bielliptic modular
curves using CM theory.

\subsection*{Acknowledgements}
The first author completed this work at the Max Planck Institute for Mathematics in Bonn and appreciates its hospitality and financial support.
We thank Mugurel Barc\u{a}u for his careful reading of
Sections~\ref{sec:language} and~\ref{sec:parity} and for many helpful
suggestions that clarified the arguments and improved the exposition. We are also grateful to Vlad Matei and Alex Pascadi for their valuable
feedback on Section~\ref{sec:qqext} and to Maleeha Khawaja for bringing Derickx’s single-source theorem (Theorem~\ref{thm:Deric}) to our attention.

\subsection*{Use of artificial intelligence tools}
The authors used OpenAI's ChatGPT and Anthropic's Claude (several models over time, including GPT 6 Astra and Opus 5.5) in long-term conversations to help locate and organise the relevant literature, to critically review portions of the mathematical arguments and to improve exposition. OpenAI Codex was also used for auxiliary computational work related to Remark~\ref{rem:higher-degree modular}. The authors carefully checked every source, computation and proof step found or produced with the help of these systems, and take full responsibility for the contents of the article.

\section{Discriminants of finite covers}
\label{sec:language}

This section is dedicated to the
discriminant divisor of a cover $\psi: C\to E$, which should be viewed as the coordinate-free version of the discriminant of the defining polynomial of the cover.

Let $k$ be a field of characteristic zero.
For a closed point $b$ of a curve $E/k$, we write
$k(b):=\mathcal O_{E,b}/\mathfrak m_b$ for its residue field,
a finite extension of $k$, and let $\deg(b)=[k(b):k]$.
A choice of geometric point $\widetilde b\in E(\bar k)$
above $b$ determines a $k$-embedding $k(b)\hookrightarrow\bar k$,
whose image is the field of definition $k(\widetilde b)$.
When $k=\mathbb Q$, we use the notation $\mathbb Q(b)$.

\subsection{The discriminant divisor}

 Let $\psi:C\to E$ be a
finite morphism of degree $n$ between smooth projective geometrically
integral curves over $k$. 

Let $\mathcal A:=\psi_*\mathcal O_C$.
Since $\psi$ is finite, $\mathcal A$ is a finite quasi-coherent
$\mathcal O_E$-algebra. Since $C$ is integral and $\psi$ is
dominant, $\mathcal A$ is torsion free as an $\mathcal O_E$-module.
For every point $b\in E$, the local ring $\mathcal O_{E,b}$ is
either a field or a discrete valuation ring, because $E$ is a regular
curve. Hence $\mathcal A_b$ is a free $\mathcal O_{E,b}$-module of finite rank. It follows that $\mathcal A$ is locally
free. Its rank is $n=[k(C):k(E)]$, so $\psi$ is finite flat of degree $n$.

\begin{definition}\label{def:disc}
On an open set on which $\mathcal A$ has basis $e_1,\ldots,e_n$, form
the determinant
\[
 \det\bigl(\operatorname{Tr}_{\mathcal A/\OO_E}(e_i e_j)\bigr)_{1\leq i,j\leq n}.
\]
A change of basis multiplies this determinant by the square of a unit,
so these local generators define a locally principal ideal sheaf
$\dd_\psi\subset\OO_E$, called the \emph{discriminant ideal} of
$\psi$. Let $\eta$ be the generic point of $E$. The trace pairing
on $\mathcal A_\eta=k(C)$ is the field trace pairing for
$k(C)/k(E)$. Since this extension is separable, the pairing is
nondegenerate, so its determinant is nonzero. Thus
$(\dd_\psi)_\eta\neq0$. As $E$ is integral, the local generators of
$\dd_\psi$ are non-zero divisors. Hence $\dd_\psi$ is an invertible
ideal sheaf and determines an effective divisor
\begin{equation} \label{def:disc-divisor}
 D_\psi
 =
 \sum_{b\in E^{(1)}}m_b[b],
 \qquad
 m_b=\ord_b(\dd_\psi),
\end{equation}
where $E^{(1)}$ denotes the set of closed points of $E$. We call
$D_\psi$ the \emph{discriminant divisor} of $\psi$.
\end{definition}

After base change to a fixed algebraic closure $\bar k$ of $k$, we
write
\begin{equation}\label{eq:disc-base-change}
 (D_\psi)_{\bar k}
 =
 \sum_{b\in E(\bar k)}m_b[b].
\end{equation}
The trace pairing and its determinant commute with extension of the base
field, so $(D_\psi)_{\bar k}=D_{\psi_{\bar k}}$. The degree of a
divisor is invariant under extension of the base field; in particular,
\[
 \deg\bigl((D_\psi)_{\bar k}\bigr)=\deg D_\psi.
\]
For $P\in C(\bar k)$, let $e_P$ denote the ramification index of
$\psi$ at $P$.

\begin{lemma}\label{lem:disc-ramification}
For every $b\in E(\bar k)$, one has
\[
 m_b
 =
 \sum_{\substack{P\in C(\bar k)\\\psi(P)=b}}(e_P-1).
\]
Consequently,
\[
 \deg D_\psi
 =
 2g(C)-2-n\bigl(2g(E)-2\bigr).
\]
\end{lemma}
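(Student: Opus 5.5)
Since $(D_\psi)_{\bar k}=D_{\psi_{\bar k}}$ and the trace pairing commutes with base change, we may assume $k=\bar k$. The order $m_b$ can be computed after passing to the completion $\widehat{\OO}_{E,b}\cong \bar k[[t]]$, with $t$ a uniformiser at $b$: completion is faithfully flat and commutes with the trace pairing, so $\ord_b(\dd_\psi)$ equals the $t$-adic valuation of the discriminant of the finite free $\bar k[[t]]$-algebra $\widehat{\mathcal A}_b=\mathcal A_b\otimes\widehat{\OO}_{E,b}$.

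The first step is to decompose this algebra. Because $\psi$ is finite, $\widehat{\mathcal A}_b\cong\prod_{\psi(P)=b}\widehat{\OO}_{C,P}$. Each factor is a complete DVR, $\widehat{\OO}_{C,P}\cong\bar k[[s]]$, and on it $t=u\,s^{e_P}$ with $u$ a unit. The trace form of a product is the orthogonal sum of the trace forms of the factors, so the discriminant is the product of the local discriminants. It therefore suffices to show that the discriminant of $\bar k[[s]]$ over $\bar k[[t]]$ has valuation $e_P-1$.

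For that local computation, note that $\bar k$ is algebraically closed of characteristic zero, so $u$ has an $e_P$-th root. After reparametrising $s$ we get $t=s^{e}$ with $e=e_P$, and $\{1,s,\dots,s^{e-1}\}$ is a basis. In characteristic zero the discriminant of this basis is that of the polynomial $X^e-t$, namely $\pm e^e t^{e-1}$. Since $e^e$ is a unit, the valuation is $e-1$. Equivalently, one can use that the discriminant is the norm of the different and that the different is $s^{e-1}$ for tame ramification. This gives the formula for $m_b$.

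The degree statement then follows from Riemann--Hurwitz. We have $\deg D_\psi=\deg (D_\psi)_{\bar k}=\sum_b m_b=\sum_{P}(e_P-1)$, and in characteristic zero, where all ramification is tame, Riemann--Hurwitz reads $2g(C)-2=n(2g(E)-2)+\sum_P(e_P-1)$.

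I expect the main care to be needed in the reduction step: checking that the ideal $\dd_\psi$ localised and completed at $b$ is generated by the discriminant of $\widehat{\mathcal A}_b$, and that the splitting of $\widehat{\mathcal A}_b$ into a product over the fibre is compatible with the trace. Both are standard facts about completions of finite algebras over a complete local ring. The tame local computation itself is routine.
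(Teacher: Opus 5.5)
Your proof is correct, but it takes a more hands-on route than the paper. The paper does no local computation. It introduces the different $\DD_\psi$ and uses that in characteristic zero its coefficient at $P$ is $e_P-1$. It then cites the discriminant--different formula $D_\psi=\psi_*\DD_\psi$ for the semilocal Dedekind extension $\OO_{E,b}\subset(\psi_*\OO_C)_b$ at each closed point $b$ of $E$ over $k$. Only afterwards does it base-change to $\bar k$, where all residue degrees become $1$.

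You reverse the order. You base-change first, complete at $b$, and split $\widehat{\mathcal A}_b$ over the fibre. You then compute each local discriminant directly from the normal form $t=s^{e}$, using $\operatorname{disc}(X^e-t)=\pm e^e t^{e-1}$. In effect you reprove the tame case of the discriminant--different formula.

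What each route buys:
\begin{itemize}
\item The paper's version is shorter and gives the divisor identity $D_\psi=\psi_*\DD_\psi$ already over $k$, with residue degrees built in.
\item Yours is self-contained and avoids the different altogether.
\item Yours also shows exactly where characteristic zero is used. It enters when you extract an $e$-th root of the unit $u$ via Hensel's lemma, which needs $e$ invertible, and when you use that $e^e$ is a unit.
\end{itemize}

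The reduction steps you flag as needing care are standard and go through as you describe. These are the compatibility of $\ord_b(\dd_\psi)$ with completion, and the product decomposition of the completed finite algebra together with its block-diagonal trace form.
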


\begin{proof}
Let $\DD_\psi$ denote the different divisor of $\psi$; see
\cite[\S7.4.2, pp.~289--291]{Liu}. Since $\operatorname{char}(k)=0$,
the cover is tamely ramified and the coefficient of $\DD_\psi$ at a
point $P$ is $e_P-1$. The local discriminant--different formula,
applied over each closed point $b\in E$ to the finite extension of
$\mathcal O_{E,b}$ to the semilocal Dedekind domain
$(\psi_*\mathcal O_C)_b$, gives
\[
 D_\psi=\psi_*\DD_\psi;
\]
see \cite[Chapter~III, Section~2]{Neu} and
\cite[Tags~0BVH and~0C17]{Stacks}. Applying the same identity after base
change to $\bar k$, where all residue degrees are $1$, yields the
formula for $m_b$. Taking degrees and applying the Riemann--Hurwitz
formula gives the final assertion.
\end{proof}

In particular, when $E$ is an elliptic curve,
\begin{equation}\label{eq:deg-discr}
 \deg D_\psi=2g(C)-2.
\end{equation}

\begin{definition}\label{def:odd}
The \emph{odd locus} of the discriminant divisor is the reduced closed
subscheme
\[
 B_\psi
 :=
 \coprod_{\substack{b\in E^{(1)}\\m_b\text{ odd}}}
 \Spec k(b)
 \hookrightarrow E.
\]
Since $E$ is regular of dimension $1$, the reduced closed subscheme
$B_\psi$ determines the reduced effective divisor
\[
 \sum_{\substack{b\in E^{(1)}\\m_b\text{ odd}}}[b].
\]
By abuse of notation, we also write $B_\psi$ for this divisor.
We refer to either of them as the \emph{odd part} of $D_\psi$.
\end{definition}

After base change to $\bar k$, its geometric points form the finite set
\[
 B_\psi(\bar k)
 =
 \{b\in E(\bar k):m_b\text{ is odd}\}.
\]
This set carries its natural $G_k$-action, where
$G_k:=\Gal(\bar k/k)$. Moreover,
$
 \deg B_\psi\equiv\deg D_\psi\pmod2$.
Thus, when $E$ is an elliptic curve, \eqref{eq:deg-discr} gives
$\deg B_\psi\equiv0\pmod2$.

\begin{remark}\label{rem:connected-odd-locus}
Suppose that $E$ is an elliptic curve and that $B_\psi$ is nonempty
and connected over $k$. Since $B_\psi$ is finite and reduced, this
means that it consists of a single closed point of $E$, not necessarily
a $k$-rational point. Equivalently, the action of $G_k$ on
$B_\psi(\bar k)$ is transitive. Moreover, $\deg B_\psi$ is even and
hence at least $2$. Jordan's Theorem therefore shows that this action
contains a derangement (see \cite{Ser03}). In
particular, the Galois-theoretic hypothesis in Theorem~\ref{thm:main} is
automatic in this case.
\end{remark}

\subsection{The projective line case}

We now show that Definition~\ref{def:odd} captures, without coordinates,
both hypotheses imposed on the squarefree polynomial in
\cite[Theorem~9]{KS}.

\begin{proposition}\label{prop:KS-dictionary}
In the notation above, take $k=\Q$ and $E=\PP^1$, so that $\psi:C\to\PP^1$. Choose an affine coordinate $T$ on
$\PP^1$, with corresponding point at infinity $\infty$. Let
$Y\in\Q(C)$ satisfy $\Q(C)=\Q(T)(Y)$, and choose
$F(T,Y)\in\Q[T,Y]$, irreducible of degree $n$ in $Y$,
such that $F(T,Y)=0$ in $\Q(C)$. Write
\[
 \Delta_Y(F)=g(T)h(T)^2,
\]
where $g,h\in\Q[T]$ and $g$ is squarefree. Write $\Gal(g)$ for
the Galois group of the splitting field of $g$, acting on its roots. Then:
\begin{enumerate}
\item the finite geometric points of $B_\psi$ are exactly the roots
of $g$;
\item $\infty\in B_\psi(\Q)$ if and only if $\deg g$ is odd;
\item the conditions that $\deg g$ be even and that the action of
$\Gal(g)$ on the roots of $g$ contain a derangement are equivalent
to the condition that the action of $G_\Q$ on
$B_\psi(\Qbar)$ contain a derangement.
\end{enumerate}
\end{proposition}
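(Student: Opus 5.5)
Part (1) is local at finite points, so I work over the affine chart $U=\PP^1\setminus\{\infty\}=\Spec\Q[T]$. Let $R\subset\Q(C)$ be the integral closure of $\Q[T]$; then $\Gamma(U,\mathcal A)=R$, and $\dd_\psi|_U$ is generated by $\operatorname{disc}(R/\Q[T])$. Since $F$ is irreducible of degree $n$ in $Y$, clearing denominators gives an integral element $Y'=a(T)Y$ generating the same extension. The order $\Q[T][Y']$ has discriminant $a^{n(n-1)}\Delta_Y(F)$ up to a unit in $\Q^\times$. The index formula then gives
\[
\operatorname{disc}(\Q[T][Y'])=[R:\Q[T][Y']]^2\operatorname{disc}(R),
\]
so $\Delta_Y(F)$ and the generator of $\dd_\psi|_U$ agree up to a square in $\Q(T)^\times$ and a constant. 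Even if $F$ is not monic, $\Delta_Y(F)$ differs from $\operatorname{disc}$ of the monic model only by a power of the leading coefficient whose exponent $2n-2$ is even. Hence for every finite $b\in\Qbar$ we get $m_b\equiv\ord_{T-b}\Delta_Y(F)=\ord_{T-b}g+2\ord_{T-b}h\pmod2$. Because $g$ is squarefree, $m_b$ is odd exactly when $g(b)=0$, which proves (1).

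For (2) I use the parity of the degree. Applying Lemma~\ref{lem:disc-ramification} with $g(\PP^1)=0$ gives $\deg D_\psi=2g(C)-2+2n$, which is even. Therefore $\deg B_\psi\equiv\deg D_\psi\equiv0\pmod 2$. By (1), $B_\psi$ has $\deg g$ finite geometric points, together with possibly $\infty$, which is $\Q$-rational. So $\infty\in B_\psi$ if and only if $\deg g$ is odd. The only care needed here is that $\infty$ contributes a point of degree $1$; it does, since $\infty\in\PP^1(\Q)$.

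For (3), suppose first that $\deg g$ is even. Then by (1) and (2), $B_\psi(\Qbar)$ is exactly the set of roots of $g$, with the same $G_\Q$-action factoring through $\Gal(g)$, so the two derangement conditions coincide. Now suppose $\deg g$ is odd. Then $\infty\in B_\psi(\Q)$ is fixed by every $\sigma\in G_\Q$, so no derangement exists, and the left-hand condition fails as well. Hence the conditions are equivalent. I expect the main obstacle to be the first step: carefully justifying that $\Delta_Y(F)$ equals the discriminant of the integral closure up to squares and constants, in particular when $Y$ is not integral over $\Q[T]$ and when $F$ is not monic. I would handle this by the normalisation $Y'=a(T)Y$ and the index formula above.
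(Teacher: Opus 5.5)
Your proposal is correct. For (1) you use essentially the paper's idea: two $\Q(T)$-bases of $\Q(C)$ have discriminants differing by a square in $\Q(T)^\times$. The paper avoids your detour through the integral element $Y'=a_n(T)Y$ and the index formula. It compares the non-integral basis $(1,Y,\ldots,Y^{n-1})$, whose discriminant is $a_n^{-2n+2}\Delta_Y(F)$, directly with a local basis of $(\psi_*\OO_C)_b$ via a matrix in $\GL_n(\Q(T))$. (Incidentally, with the usual normalisation of $\Delta_Y(F)$ for non-monic $F$, the discriminant of $\Q[T][Y']$ is $a_n^{(n-1)(n-2)}\Delta_Y(F)$ rather than $a_n^{n(n-1)}\Delta_Y(F)$; only the parity of the exponent matters, so this is harmless.)

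The genuine difference is in (2). The paper runs the same local comparison at $b=\infty$ and reads off $\ord_\infty\Delta_Y(F)=-\deg g-2\deg h\equiv-\deg g\pmod 2$, so (1) and (2) come from one uniform computation valid at every place. You instead argue globally. Lemma~\ref{lem:disc-ramification} with $g(\PP^1)=0$ gives $\deg D_\psi=2g(C)-2+2n$, so $\#B_\psi(\Qbar)=\deg B_\psi$ is even. Since by (1) the finite part consists of exactly $\deg g$ points (a squarefree polynomial in characteristic $0$ has distinct roots), $\infty$ must lie in $B_\psi$ precisely when $\deg g$ is odd. Your route never has to examine the cover near $\infty$, at the price of invoking the discriminant--different formula and Riemann--Hurwitz. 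It is the same parity mechanism the paper uses to show $\deg B_\psi$ is even for elliptic targets. Part (3) is identical to the paper's argument.
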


\begin{proof}
Let $a_n(T)$ be the leading coefficient of $F$ as a polynomial in
$Y$. Over $K=\Q(T)$, the monic minimal polynomial of $Y$ is
$a_n^{-1}F$. Consequently, the discriminant of the $K$-basis
$\boldsymbol\beta=(1,Y,\ldots,Y^{n-1})$ is
\[
 \operatorname{disc}(\boldsymbol\beta)
 =
 a_n(T)^{-2n+2}\Delta_Y(F).
\]

Fix a place $b$ of $\PP^1$, possibly $b=\infty$, and let
$A_b=\OO_{\PP^1,b}$. The finite flat $A_b$-algebra
$(\psi_*\OO_C)_b$ has an $A_b$-basis
$\mathbf e=(e_1,\ldots,e_n)$. After tensoring with $K$, this is a
$K$-basis of $\Q(C)$. Thus, although $\boldsymbol\beta$ need not be
an $A_b$-basis, there is some $M\in\GL_n(K)$ such that
$\boldsymbol\beta=\mathbf eM$. Hence
\[
 \operatorname{disc}(\boldsymbol\beta)
 =
 \det(M)^2\operatorname{disc}(\mathbf e).
\]
The valuation of $\operatorname{disc}(\mathbf e)$ is the coefficient
$m_b$ of the discriminant divisor. Since both the exponent $2n-2$ of
$a_n$ and the exponent of $\det(M)$ are even, we obtain
\[
 m_b\equiv\ord_b\Delta_Y(F)\pmod2.
\]

As $\Delta_Y(F)=gh^2$ and $g$ is squarefree, its valuation at a
finite geometric point $\alpha$ is odd precisely when $g(\alpha)=0$,
proving (1). At infinity,
\[
 \ord_\infty\Delta_Y(F)
 \equiv
 \ord_\infty g
 \equiv
 -\deg g
 \pmod2,
\]
which proves (2).

If $\infty\in B_\psi(\Q)$, then it is fixed by every Galois element.
If $\infty\notin B_\psi(\Q)$, then the Galois set
$B_\psi(\Qbar)$ is exactly the root set of $g$. This proves (3).
\end{proof}

The dictionary between the projective line setting of \cite{KS} and the
elliptic curve setting of this work is therefore as follows. The cover
$\psi:C\to\PP^1$, together with an affine coordinate $T$, is
replaced by the coordinate-free cover $\psi:C\to E$. Moreover, the polynomial
discriminant $\Delta_Y(F)$ is replaced by the discriminant divisor
$D_\psi$, and its squarefree part $g$ is replaced by the odd part
$B_\psi$. Accordingly, the condition that $\deg g$ be even and that the action
on the roots of $g$ contains a derangement is replaced by the condition that the action of $G_\Q$ on
$B_\psi(\Qbar)$ contain a derangement.

Finally, the specialisation $T=a/b$ used in \cite{KS} is replaced below
by pullback along a section of an integral model of $E$; see
Lemma~\ref{lem:bc} and the proof of Theorem~\ref{thm:parity}.

\subsection{Double covers}

The odd locus has a particularly concrete description for a double cover.

\begin{example}\label{ex:double-cover}
Assume that $\deg\psi=2$. There
is a class
$\delta\in k(E)^\times/k(E)^{\times2}$
such that
$k(C)=k(E)(\sqrt\delta)$.
Choose a representative and write
\[
 \operatorname{div}(\delta)=\sum_Q m_Q[Q].
\]
At a closed point $Q$, choose a uniformiser $t$ and write
$\delta=ut^{m_Q}$, with $u$ a unit. The quadratic extension is
ramified at $Q$ if and only if $m_Q$ is odd. Since the ramification is
tame, Lemma~\ref{lem:disc-ramification} gives
$$
 D_\psi
 =
 B_\psi
 =
 \sum\limits_{m_Q\text{ odd}}[Q].$$
Thus, for a double cover, the odd part of the discriminant divisor is
precisely the branch divisor. This description is unchanged when
$\delta$ is multiplied by a square; in particular, odd poles as well as
odd zeros must be included.

For $R\in E(k)\setminus\supp(\operatorname{div}(\delta))$, the fibre is
\[
 C_R\simeq\Spec\bigl(k[w]/(w^2-\delta(R))\bigr).
\]
It is split when $\delta(R)$ is a square in $k$, and otherwise is the
spectrum of the quadratic field $k(\sqrt{\delta(R)})$.
\end{example}

\begin{remark}\label{rem:galois-odd}
Suppose that $\psi$ is geometrically Galois of odd degree $n$, for
instance a Kummer cover $w^\ell=f$ of odd prime degree $\ell$. Above a
point $b\in E(\bar k)$ the Galois group permutes the fibre transitively,
so all points of the fibre have the same ramification index $e$, which
divides $n$ and is therefore odd. Hence
$m_b=(n/e)(e-1)$ is even for every $b$, and $B_\psi=\varnothing$.
Thus Theorem~\ref{thm:main}(1) applies to every geometrically Galois cover
of odd degree $n$ with $g(C)>n^2+1$.
\end{remark}

\subsection{A positive density regular set of primes}

Let $\mathcal P_{\mathrm{all}}$ denote the set of rational primes, and
let $\mathcal P\subseteq\mathcal P_{\mathrm{all}}$. We use the same
notion of regularity as Khawaja and Siksek
\cite[Section~2]{KS}, due to Serre \cite{Ser75}. The set $\mathcal P$
is called \emph{regular of density} $\alpha>0$ if, for
$\operatorname{Re}(s)>1$,
\[
 \sum_{p\in\mathcal P}p^{-s}
 =
 \alpha\log\frac1{s-1}+\theta(s),
\]
where $\theta$ extends holomorphically to
$\operatorname{Re}(s)\geq1$. 
\newpage
It is called \emph{Frobenian of density}
$\alpha>0$ if there exist a finite Galois extension $L/\Q$ and a
subset $\mathcal C\subseteq G:=\Gal(L/\Q)$ such that
\begin{itemize}
\item $\mathcal C$ is a union of conjugacy classes in $G$;
\item $\alpha=\#\mathcal C/\#G$;
\item for every sufficiently large prime $p$, one has $p\in\mathcal P$
if and only if a Frobenius element $\sigma_p$ belongs to
$\mathcal C$.
\end{itemize}
Every Frobenian set of positive density is regular of the same density
\cite[Proposition~1.5]{Ser75}.

\begin{lemma}\label{lem:avoid}
Let $N\geq1$, let $U=\Spec\Z[1/N]$ and 
\(
 \mathcal B\to U
\)
be a nonempty finite \'etale scheme, and write $B=\mathcal B_\Q$ for
its generic fibre and $\mathcal B_p$ for the special fibre at a prime $p\nmid N$. 

Let $L/\Q$ be the finite Galois extension cut out by
the action of $G_\Q$ on $B(\Qbar)$, and write
$G_B=\Gal(L/\Q)$. Suppose that the action of $G_B$ on $B(\Qbar)$ contains a
derangement. Then
\[
 \alpha_B
 :=
 \frac{\#\{\sigma\in G_B:B(\Qbar)^\sigma=\varnothing\}}{\#G_B}
\]
lies in $(0,1)$, and there is a Frobenian, hence regular, set
$\mathcal P_B$ of primes $p\nmid N$, of density $\alpha_B$, such
that
\(
 \mathcal B_p(\FF_p)=\varnothing
\)
for every $p\in\mathcal P_B$.
\end{lemma}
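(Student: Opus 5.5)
The plan is to read off $\mathcal P_B$ from the Frobenius action via Chebotar\"ev. First, $\alpha_B\in(0,1)$. The numerator is positive because the action of $G_B$ on $B(\Qbar)$ contains a derangement by hypothesis. The identity of $G_B$ fixes every point, and $B(\Qbar)\neq\varnothing$ since $\mathcal B$ is nonempty, finite and étale over $U$. So the identity is not counted, and $\alpha_B<1$.

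Next, set $\mathcal C=\{\sigma\in G_B: B(\Qbar)^\sigma=\varnothing\}$. This is a union of conjugacy classes, because $B(\Qbar)^{\tau\sigma\tau^{-1}}=\tau\bigl(B(\Qbar)^\sigma\bigr)$. Since $\mathcal B\to U$ is finite étale, the $G_\Q$-action on $B(\Qbar)$ is unramified outside $N$. Hence $L/\Q$ is unramified at every $p\nmid N$, and Frobenius conjugacy classes $\sigma_p\subset G_B$ are defined for these primes. Define $\mathcal P_B=\{p\nmid N:\sigma_p\subseteq\mathcal C\}$. By construction this set is Frobenian of density $\#\mathcal C/\#G_B=\alpha_B>0$, and by Chebotar\"ev it is infinite. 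By \cite[Proposition~1.5]{Ser75} it is then regular of density $\alpha_B$.

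It remains to show that $\mathcal B_p(\FF_p)=\varnothing$ for $p\in\mathcal P_B$, and this is the only step with content. I would use the equivalence between finite étale $U$-schemes and finite sets with continuous action of $\pi_1(U)$, which is a quotient of $G_\Q$. Concretely, write $\mathcal B=\coprod\Spec\mathcal O_i$ with each $\mathcal O_i$ a finite étale $\Z[1/N]$-algebra, hence $\mathcal O_i=\mathcal O_{K_i}[1/N]$ up to the étale structure, where $K_i$ runs over the residue fields of the closed points of $B$. An $\FF_p$-point of $\mathcal B_p$ corresponds to a prime of some $K_i$ above $p$ with residue degree $1$.

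The key step is that, for $p$ unramified, $\mathcal B_p(\FF_p)$ is in bijection with the points of $B(\Qbar)$ fixed by a Frobenius element $\mathrm{Frob}_\mathfrak{P}$. To see this, choose a prime $\mathfrak P$ of $L$ above $p$. The set $B(\Qbar)$ is identified with $\mathcal B(\overline{\FF}_p)$ via reduction modulo $\mathfrak P$; this is a bijection because the scheme is étale, so the points stay distinct mod $p$. This identification is equivariant for the decomposition group, which is isomorphic to $\Gal(\overline{\FF}_p/\FF_p)$ on the unramified quotient, with $\mathrm{Frob}_\mathfrak P$ acting on one side as the arithmetic Frobenius acts on the other. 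The $\FF_p$-points are exactly the Frobenius-fixed points of $\mathcal B(\overline{\FF}_p)$.

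Therefore, if $\sigma_p\subseteq\mathcal C$, the chosen Frobenius fixes no point of $B(\Qbar)$, and so $\mathcal B_p(\FF_p)=\varnothing$.
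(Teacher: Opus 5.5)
Your proposal is correct and follows essentially the same route as the paper. The steps match: unramifiedness of $L$ outside $N$ from finite \'etaleness; $0<\alpha_B<1$ from the derangement hypothesis and the nonempty generic fibre; Chebotar\"ev applied to the conjugation-stable set of derangements; and the identification, via reduction at a place above $p$, of $\mathcal B_p(\FF_p)$ with the points of $B(\Qbar)$ fixed by a Frobenius element.

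The only detail worth making explicit is that reduction $B(\Qbar)\to\mathcal B_p(\overline{\FF}_p)$ is a bijection because it is injective and both sets have cardinality equal to the (constant) degree of $\mathcal B\to U$. Your detour through $\mathcal O_{K_i}[1/N]$ and degree-one primes is not needed.
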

\begin{proof}
 Since
$\mathcal B\to U$ is finite \'etale, the action of $G_\Q$ on
$B(\Qbar)$ factors through the
\'etale fundamental group of $U$. In particular, $L/\Q$ is unramified
at every prime $p\nmid N$.
Let
\[
 \mathcal D_B
 :=
 \{\sigma\in G_B:B(\Qbar)^\sigma=\varnothing\}.
\]
By hypothesis, $\mathcal D_B$ is nonempty. It is stable under
conjugation, since
\[
 B(\Qbar)^{\tau\sigma\tau^{-1}}
 =
 \tau\bigl(B(\Qbar)^\sigma\bigr)
\]
for all $\sigma,\tau\in G_B$. Moreover, since $\mathcal B\to U$ is
finite \'etale and $U$ is connected, the rank of $\mathcal B$ over
$U$ is constant, so the generic fibre $B$ is nonempty. Hence the
identity does not belong to $\mathcal D_B$, and
$0<\alpha_B<1$.

Let $\mathcal P_B$ be the set of primes $p\nmid N$ whose arithmetic
Frobenius conjugacy class in $G_B$ is contained in $\mathcal D_B$. By
Chebotar{\"e}v's Density Theorem~\cite[p.~34]{Chebotarev}, it has Frobenian (and hence regular) density
\(\frac{\#\mathcal D_B}{\#G_B}=\alpha_B.\)

For $p\nmid N$, choose a place of $\Qbar$ above $p$. Since
$\mathcal B\to U$ is finite \'etale, reduction at this place identifies
$B(\Qbar)$ with $\mathcal B_p(\overline{\FF}_p)$. Under this
identification, arithmetic Frobenius on the special fibre corresponds,
up to conjugacy, to a Frobenius element, $\Frob_p\in G_B$.
For $p\in\mathcal P_B$, the corresponding element $\Frob_p$ is a derangement. Since the
rational points of the special fibre are precisely its geometric points
fixed by arithmetic Frobenius,
\[
 \mathcal B_p(\FF_p)
 =
 \mathcal B_p(\overline{\FF}_p)^{\Frob_p}
 =
 \varnothing
\]
for every $p\in\mathcal P_B$.
\end{proof}
\newpage
\section{Specialisation and discriminant parity}
\label{sec:parity}

Recall our setting from the previous section and consider $k=\Q$. 
Let $C$ and $E$ be smooth projective geometrically integral curves
over $\Q$, and let
$\psi:C\to E$ be a finite morphism of degree $n$.
The discriminant ideal $\mathfrak d_\psi$  gives rise to the divisor 
\[
 D_\psi=\sum_jm_j[b_j],
\]
where the $b_j$ are closed points of $E$. 

In the proof of Theorem~\ref{thm:main} a crucial point is relating the discriminant of our cover, $D_\psi$, to discriminant ideals of algebras obtained by specialising $\psi$ at rational points of
$E$. This section formalises this idea, and in doing so we first need to choose a good integral model for the curve $C$.
\begin{lemma}[A good integral model]\label{lem:spread}
There is a finite set of primes $\Sigma$ such that, for
$U=\Spec\Z[1/\Sigma]$, there are smooth proper models $\mathcal C/U$
and $\mathcal E/U$ with geometrically integral fibres and generic fibres
$C$ and $E$, respectively, and a finite locally free morphism
\[
 \Psi:\mathcal C\to\mathcal E
\]
of degree $n$ extending $\psi$. Its discriminant ideal
defines an effective Cartier divisor $D_\Psi$ on $\mathcal E$ with
generic fibre $D_\psi$. Let $\mathcal Z_j$ be the reduced closure in
$\mathcal E$ of the closed point $b_j$. After enlarging $\Sigma$,
\[
 D_\Psi=\sum_jm_j\mathcal Z_j,
\]
so that $D_\Psi$ has no vertical components, each $\mathcal Z_j\to U$
is finite \'etale, and the $\mathcal Z_j$ are pairwise disjoint. In
particular,
\[
 \mathcal B_\psi
 :=
 \coprod_{\substack{j\\m_j\text{ odd}}}\mathcal Z_j
\]
is a closed subscheme of $\mathcal E$ that is finite \'etale over
$U$, with generic fibre $B_\psi$.
\end{lemma}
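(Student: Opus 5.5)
This is a standard spreading-out argument; I will carry it out in stages, enlarging the finite set of primes $\Sigma$ finitely many times.

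\emph{Step 1: models and the morphism.} Since $C$ and $E$ are smooth, projective and geometrically integral over $\Q$, they are defined by finitely many equations. Hence there exist projective models over $\Spec\Z[1/N]$ for some $N$. Smoothness, properness and geometric integrality of fibres spread out over a dense open subset of the base, by the standard limit results (EGA IV, \S\S8--9, 12; or the Stacks Project). The morphism $\psi$ is likewise given by finitely many data, so it extends to a morphism $\Psi:\mathcal C\to\mathcal E$ after inverting more primes. Finiteness is also a constructible property that spreads out. Since $\mathcal C$ and $\mathcal E$ are smooth over $U$ with one-dimensional fibres, $\Psi$ is a finite morphism between regular schemes of dimension $2$. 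Miracle flatness (or fibrewise flatness over the base) then makes $\Psi$ flat, so it is finite locally free. Its rank equals the generic rank $n$.

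\emph{Step 2: the discriminant divisor.} Form the discriminant ideal of $\Psi_*\OO_{\mathcal C}$ exactly as in Definition~\ref{def:disc}, using the trace pairing on local bases. Because trace and determinant commute with flat base change, its restriction to the generic fibre is $\dd_\psi$. It is locally principal, and it is nonzero generically, so $D_\Psi$ is an effective Cartier divisor on the integral scheme $\mathcal E$ with generic fibre $D_\psi$.

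\emph{Step 3: the shape of $D_\Psi$.} Write $D_\Psi=\sum_j m_j\mathcal Z_j+V$, where $\mathcal Z_j$ is the closure of $b_j$ and $V$ is vertical. The vertical part $V$ is supported on finitely many fibres, and removing those primes kills it. Each $\mathcal Z_j\to U$ is finite, since it is closed in the proper scheme $\mathcal E$ and quasi-finite generically. Its generic fibre $\Spec\Q(b_j)$ is étale over $\Q$ because the characteristic is zero. The locus of primes over which $\mathcal Z_j$ fails to be étale is closed and misses the generic point, so it is finite; the same holds for the flatness of $\mathcal Z_j\to U$.

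The $\mathcal Z_j$ have pairwise disjoint generic fibres, so $\mathcal Z_i\cap\mathcal Z_j$ is finite over $U$ with empty generic fibre. Its image is therefore a finite set of primes, which we also invert. The disjoint union $\mathcal B_\psi$ of the $\mathcal Z_j$ with $m_j$ odd is then a closed subscheme, finite étale over $U$, with generic fibre $B_\psi$.

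\emph{Main obstacle.} I expect the only delicate point to be the flatness and local freeness of $\Psi$. I would secure it first via miracle flatness on the regular two-dimensional schemes. The remaining steps are routine instances of "a property holding at the generic fibre holds over a dense open of $\Spec\Z$".
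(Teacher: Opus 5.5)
Your proposal is correct and follows essentially the same route as the paper: spread out $C$, $E$ and $\psi$, form $D_\Psi$ from the trace pairing, discard the finitely many vertical components, spread out \'etaleness of each $\mathcal Z_j\to U$, and invert the finitely many primes lying under the sets $\mathcal Z_i\cap\mathcal Z_j$. The only difference is that you derive flatness of $\Psi$ from miracle flatness (or the fibrewise criterion), whereas the paper takes $\Psi$ finite and flat directly from the spreading-out theorem \cite[Theorem~3.2.1(iii),(iv)]{Poo}; both routes are valid.
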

\begin{proof}

By the spreading out theorem \cite[Theorem~3.2.1]{Poo}, after replacing $\Spec \Z$ by a nonempty open subscheme $U=\Spec \Z[1/\Sigma],$ the curves $C$ and $E$ admit models $\mathcal C$ and $\mathcal E$ over $U$ by part~(i), which may be taken smooth, proper with geometrically integral fibres by part~(ii). By parts~(iii) and~(iv), respectively, $\psi$ extends to a morphism \[ \Psi:\mathcal C\longrightarrow\mathcal E \] which may be taken finite and flat. Hence $\Psi$ is finite locally free of degree $n$: its degree is locally constant on the integral scheme $\mathcal E$, and its generic fibre has degree $n$. Note that as $U $ is open in $\Spec \Z$, it follows that $\Sigma$ is a finite set of primes.

The local discriminant determinants differ on overlaps by squares of
units, so they define a locally principal ideal sheaf
$\mathfrak d_\Psi$. The trace pairing commutes with base change, so
its restriction to the generic fibre is
$\mathfrak d_\psi\neq0$. Since $\mathcal E$ is integral, the local
generators are non-zero divisors. Thus $\mathfrak d_\Psi$ is
invertible and defines an effective Cartier divisor $D_\Psi$, whose
generic fibre is $D_\psi$.
Thus, we may write \[ D_\Psi=\sum_j m_j\mathcal Z_j+V, \] where $V$ is supported on finitely many vertical components. After enlarging $\Sigma$ by the finitely many primes supporting $V$, we therefore have $D_\Psi=\sum_j m_j\mathcal Z_j$.

For each $j$, the generic fibre of $\mathcal Z_j\to U$ is $\Spec\Q(b_j)\to\Spec\Q$, which is finite \'etale. By \cite[Theorem~3.2.1(iv)]{Poo}, after shrinking $U$ further, each $\mathcal Z_j\to U$ is finite \'etale. Moreover, for $i\neq j$, the intersection $\mathcal Z_i\cap\mathcal Z_j$ has empty generic fibre. Since it is proper over $U$, its image in $U$ is closed and does not contain the generic point, hence is finite. Removing these images makes the $\mathcal Z_j$ pairwise disjoint. Since only finitely many components occur, one common shrinking gives all the assertions.
\end{proof}
\subsection{Specialisations and fibre discriminants}
Let $R\in E(\Q)$, and extend it by properness to a section
$s_R:U\to\mathcal E$. Form the pullback of
$\Psi:\mathcal C\to\mathcal E$ along $s_R$:
\[
\begin{array}{ccc}
\mathcal C_R:=\mathcal C\times_{\mathcal E}U & \longrightarrow & \mathcal C \\
\downarrow\scriptstyle{\pi_R} & & \downarrow\scriptstyle{\Psi} \\
U & \xrightarrow{\ s_R\ } & \mathcal E.
\end{array}
\]
Since finite locally free morphisms are preserved under base change,
$\pi_R:\mathcal C_R\to U$ is finite locally free of degree $n$. Let
\[
\mathcal A_R:=(\pi_R)_*\mathcal O_{\mathcal C_R},
\qquad
A_R:=\Gamma(U,\mathcal A_R).
\]
Since $\pi_R$ is finite and $U$ is affine,
$\mathcal C_R\simeq\Spec A_R$. Thus $\mathcal A_R$ is a locally
free $\mathcal O_U$-algebra of rank $n$. Equivalently, by base change
for finite morphisms,
\[
\mathcal A_R\simeq s_R^*(\Psi_*\mathcal O_{\mathcal C}).
\]
For $R\in E(\Q)$, let
\[
C_R:=C\times_{E, R}\Spec \Q=\psi^{-1}(R)
\]
denote the scheme-theoretic fibre of $\psi$ above $R$.
\begin{lemma}\label{lem:order}
With the notation above, suppose that there exists
$P\in C(\overline{\mathbb Q})$ such that $\psi(P)=R$ and
$[\mathbb Q(P):\mathbb Q]=n$.
Then $C_R\simeq\operatorname{Spec}\mathbb Q(P)$ as
$\mathbb Q$-schemes. Moreover, $A_R$ is a full
$\mathbb Z[1/\Sigma]$-order in $\mathbb Q(P)$.
\end{lemma}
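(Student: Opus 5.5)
The plan has two parts: first identify the rational fibre $C_R$ with $\Spec\Q(P)$, then deduce the order statement from flatness of $\pi_R$. Everything happens over $\Q$ except the final step.

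For the fibre, I start from the fact that $\psi$ is finite flat of degree $n$ (Section~\ref{sec:language}). Hence $C_R=\Spec A$ for a finite $\Q$-algebra $A$ with $\dim_\Q A=n$; this is the fibre $\mathcal A\otimes\Q(R)$ of the locally free sheaf $\mathcal A=\psi_*\OO_C$ at the rational point $R$. The closed point $x\in C$ underlying $P$ lies in $C_R$, since $\psi(P)=R$. Its residue field is $\Q(x)\simeq\Q(P)$, which has degree $n$ by hypothesis.

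Now $A$ is Artinian, so $A\simeq\prod_i A_i$ over the points of $C_R$. Each factor satisfies $\dim_\Q A_i\geq[\Q(x_i):\Q]$, because $A_i$ surjects onto its residue field. The factor at $x$ therefore already has dimension at least $n=\dim_\Q A$. Hence $x$ is the only point of the fibre, and $A=A_x$ has $\dim A_x=\dim\Q(x)$. It follows that the maximal ideal of $A_x$ is zero, so $A\simeq\Q(P)$ and $C_R\simeq\Spec\Q(P)$ as $\Q$-schemes.

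For the order, base change gives $A_R\otimes_{\Z[1/\Sigma]}\Q\simeq\Gamma(C_R,\OO_{C_R})\simeq\Q(P)$. Formation of $\pi_{R*}$ commutes with flat base change to the generic point, and $\mathcal C_R\times_U\Spec\Q=C_R$ because $s_R$ restricts to $R$ on the generic fibre. Since $\mathcal A_R$ is locally free of rank $n$ over the Dedekind (in fact PID) ring $\Z[1/\Sigma]$, the module $A_R$ is free of rank $n$, hence torsion free. Therefore $A_R\to A_R\otimes\Q=\Q(P)$ is injective, identifying $A_R$ with a subring of $\Q(P)$.

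That subring is finitely generated as a $\Z[1/\Sigma]$-module, and its $\Q$-span is all of $\Q(P)$; this is exactly the definition of a full $\Z[1/\Sigma]$-order. As a consequence, $A_R$ lies in the integral closure of $\Z[1/\Sigma]$ in $\Q(P)$. The main obstacle is purely bookkeeping: checking that the generic fibre of $\mathcal C_R$ is really the scheme-theoretic fibre $\psi^{-1}(R)$, i.e.\ that $\mathcal C\times_{\mathcal E}U$ base-changed to $\Q$ equals $C\times_{E,R}\Spec\Q$. This follows from transitivity of fibre products together with $(s_R)_\Q=R$.
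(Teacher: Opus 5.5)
Your proof is correct and follows essentially the same route as the paper. The fibre is identified by comparing dimensions against the surjection onto the residue field $\Q(P)$; you phrase this through the Artinian decomposition of $A$, while the paper uses the evaluation map $\Gamma(C_R,\OO_{C_R})\to\Q(P)$ directly. Torsion-freeness of $A_R$ then embeds it into $A_R\otimes\Q\simeq\Q(P)$. The only cosmetic difference is at the end: you take finite generation plus spanning as the definition of a full order and deduce integrality, whereas the paper checks integrality explicitly to place $A_R$ inside $\OO_{\Q(P)}[1/\Sigma]$.
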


\begin{proof}
Since $\psi$ is finite locally free of degree $n$, the
coordinate algebra $\Gamma(C_R,\mathcal O_{C_R})$
has dimension $n$ over $\mathbb Q$.
Moreover, evaluation at $P$ gives a surjective $\mathbb Q$-algebra
homomorphism
$\Gamma(C_R,\mathcal O_{C_R})\to\mathbb Q(P)$.
Indeed, its image is a finite-dimensional integral
$\mathbb Q$-algebra, hence a field, and is precisely
the field of definition of $P$.
Since $[\mathbb Q(P):\mathbb Q]=n$, this homomorphism
is an isomorphism. Thus
$C_R\simeq\operatorname{Spec}\mathbb Q(P)$.

The generic fibre of $\mathcal C_R\to U$ is $C_R$, so
$A_R\otimes_{\mathbb Z[1/\Sigma]}\mathbb Q
\simeq\mathbb Q(P)$.
Since $A_R$ is finite locally free over
$\mathbb Z[1/\Sigma]$, it is torsion free and therefore
embeds into $\mathbb Q(P)$ under this identification.
Every element of $A_R$ is integral over
$\mathbb Z[1/\Sigma]$, whose integral closure in
$\mathbb Q(P)$ is $\mathcal O_{\mathbb Q(P)}[1/\Sigma]$.
Consequently,
$A_R\subseteq\mathcal O_{\mathbb Q(P)}[1/\Sigma]$.
Together with
$A_R\otimes_{\mathbb Z[1/\Sigma]}\mathbb Q
=\mathbb Q(P)$,
this shows that $A_R$ is a full
$\mathbb Z[1/\Sigma]$-order in $\mathbb Q(P)$.
\end{proof}

We now show that in the setting of Lemma~\ref{lem:order}, taking the discriminant of a finite locally free algebra commutes with specialisation.

\begin{lemma}\label{lem:bc}
Let $\Psi:\mathcal C\to\mathcal E$ be finite locally free, and let
$\mathcal A:=\Psi_*\mathcal O_{\mathcal C},$
with discriminant ideal $\mathfrak d_\Psi\subset\mathcal O_{\mathcal E}$. For a section $s:U\to\mathcal E$, let
$\mathcal A_s:=s^*\mathcal A$. Then
\[
 \mathfrak d(\mathcal A_s/\mathcal O_U)
 =
 s^{-1}\mathfrak d_\Psi\cdot\mathcal O_U.
\]
If the generic fibre of $\mathcal A_s$ is \'etale, then the ideal on the
right is nonzero and
\[
 \operatorname{div}_U\bigl(\mathfrak d(\mathcal A_s/\mathcal O_U)\bigr)
 =
 s^*D_\Psi.
\]
In particular, for $p\notin\Sigma$,
\[
 v_p\bigl(\mathfrak d(\mathcal A_s/\mathcal O_U)\bigr)
 =
 i_{s(p)}\bigl(s(U),D_\Psi\bigr),
\]
where $i_{s(p)}$ denotes the local intersection multiplicity at
$s(p)\in\mathcal E$; see~\cite[Definition~9.1.1]{Liu}.
\end{lemma}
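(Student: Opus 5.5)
The plan is to work locally on $\mathcal E$ and reduce everything to the compatibility of the trace form with base change. Cover $\mathcal E$ by affine opens $V=\Spec S$ on which $\mathcal A|_V$ is free with basis $e_1,\dots,e_n$, so that $\mathfrak d_\Psi|_V$ is generated by $\delta_V=\det(\Tr_{\mathcal A/\OO_{\mathcal E}}(e_ie_j))$. Write $s^{-1}(V)=\Spec R'$ with $R'$ a localisation of $\Z[1/\Sigma]$, so that $s$ restricts to a ring map $\phi:S\to R'$. Then $\mathcal A_s|_{s^{-1}(V)}=\mathcal A(V)\otimes_{S,\phi}R'$ is free with basis $e_i\otimes1$. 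Moreover, the trace of multiplication by $x\otimes 1$ is computed by the same matrix with $\phi$ applied to its entries. Hence $\Tr_{\mathcal A_s/R'}(e_ie_j\otimes1)=\phi(\Tr(e_ie_j))$, and the discriminant of $\mathcal A_s$ in this basis is $\phi(\delta_V)$. This generates $s^{-1}\mathfrak d_\Psi\cdot\OO_U$ on $s^{-1}(V)$. Since both sides are defined by local generators that change by unit squares, the local equalities glue to give the first identity.

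For the second assertion, suppose the generic fibre $\mathcal A_s\otimes\Q$ is étale. Its trace form is then nondegenerate, so $\phi(\delta_V)\neq0$ in $\Q$. In particular the image $s(U)$ is not contained in $\supp D_\Psi$, and the pullback $s^*D_\Psi$ of the Cartier divisor is defined, locally given by $\phi(\delta_V)$. Therefore $\operatorname{div}_U$ of the ideal equals $s^*D_\Psi$, since on the Dedekind scheme $U$ a nonzero invertible ideal corresponds to its divisor. This step is a tautology once the first identity is in hand.

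For the final formula, take $p\notin\Sigma$ and choose $V$ containing $s(p)$, so that $D_\Psi$ is cut out near $s(p)$ by $\delta_V$. Then $v_p(\phi(\delta_V))$ is the length of $\OO_{U,p}/(\phi(\delta_V))=\OO_{s(U),s(p)}/(\delta_V|_{s(U)})$, using that $s$ is a closed immersion identifying $U$ with $s(U)$, because sections of separated morphisms are closed immersions. By \cite[Definition~9.1.1]{Liu}, this length is precisely $i_{s(p)}(s(U),D_\Psi)$, since $s(U)$ is an integral horizontal curve not contained in $\supp D_\Psi$ on the regular surface $\mathcal E$.

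The main obstacle I expect is bookkeeping rather than substance. Two points need care: first, that the trace commutes with arbitrary base change for a finite free algebra, which is the matrix computation above; second, matching Liu's definition of the intersection multiplicity with the length computation. The latter requires $\mathcal E$ regular near $s(p)$, which holds because $\mathcal E\to U$ is smooth, and $s(U)$ not a component of $D_\Psi$, which follows from the nonvanishing on the generic fibre.
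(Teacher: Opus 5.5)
Your proposal is correct and follows essentially the same route as the paper. You use a local free basis and the compatibility of the trace matrix with base change for the first identity. You get nonvanishing of the pulled-back determinant from étaleness of the generic fibre, which gives the divisor statement. Finally, you use the identification $\mathcal O_{\mathcal E,x}/\mathcal O_{\mathcal E}(-s(U))_x\simeq\mathcal O_{U,p}$ to match $v_p$ of the pulled-back local equation with Liu's length. Your explicit checks that $\mathcal E$ is regular and that $s(U)$ is not a component of $D_\Psi$ are the hypotheses Liu's definition needs, which the paper leaves implicit.
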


\begin{proof}
The assertion is local on $\mathcal E$ and $U$. Work on an affine
open over which $\mathcal A$ is free with basis $e_1,\ldots,e_n$.
After base change, the elements $e_1\otimes1,\ldots,e_n\otimes1$ form a
basis of $\mathcal A_s$. Trace commutes with base change, so the trace
matrix
\[
 \bigl(\operatorname{Tr}_{\mathcal A/\mathcal O_{\mathcal E}}(e_i e_j)\bigr)_{i,j}
\]
is carried to the trace matrix of $\mathcal A_s$. The same is therefore
true of its determinant, proving the identity of discriminant ideals.

If the generic fibre of $\mathcal A_s$ is \'etale, its discriminant is
nonzero. Hence $s^{-1}\mathfrak d_\Psi\cdot\mathcal O_U$ is nonzero,
so the pullback $s^*D_\Psi$ is defined. Since the ideal sheaf of
$s^*D_\Psi$ is $s^{-1}\mathfrak d_\Psi\cdot\mathcal O_U$, the first
assertion gives
\[
\operatorname{div}_U\bigl(\mathfrak d(\mathcal A_s/\mathcal O_U)\bigr)
=s^*D_\Psi.
\]

For $p\notin\Sigma$, let $f$ be a local equation for $D_\Psi$ at
$s(p)$. Let $s^\#: s^{-1}\mathcal{O}_{\mathcal E}\to \mathcal O_U$ be the morphism induced by $s$.
Then $s^\#(f)$ is a local equation for $s^*D_\Psi$ at
$p$, and hence
$
\operatorname{ord}_p(s^*D_\Psi)
=v_p\bigl(s^\#(f)\bigr).
$
Let $x=s(p)$. Since $D_\Psi$ is an effective Cartier divisor with
local equation $f$ at $x$, we have
\[
\mathcal O_{\mathcal E}(-D_\Psi)_x=(f)
\subseteq \mathcal O_{\mathcal E,x}.
\]
Moreover,
$\mathcal O_{\mathcal E}(-s(U))_x$ is the ideal of the section
$s(U)$ at $x$. Hence, by the definition of local intersection
multiplicity \cite[Definition~9.1.1]{Liu},
\[
i_x(s(U),D_\Psi)
=
\operatorname{length}_{\mathcal O_{\mathcal E,x}}
\frac{\mathcal O_{\mathcal E,x}}
{\mathcal O_{\mathcal E}(-s(U))_x+(f)}.
\]
Since $s:U\to s(U)$ is an isomorphism, we have
$
{\mathcal O_{\mathcal E,x}}/
{\mathcal O_{\mathcal E}(-s(U))_x}
\simeq \mathcal O_{U,p},
$
and under this identification the image of $f$ is $s^\#(f)$.
Therefore
\[
i_{s(p)}(s(U),D_\Psi)
=
\operatorname{length}_{\mathcal O_{U,p}}
\bigl(\mathcal O_{U,p}/(s^\#f)\bigr)
=
v_p(s^\#f).
\]
Thus
$
v_p\bigl(\mathfrak d(\mathcal A_s/\mathcal O_U)\bigr)
=i_{s(p)}\bigl(s(U),D_\Psi\bigr).
$
\end{proof}

\subsection{Reduction to rational fibres}

The preceding lemmas control the discriminant of a rational fibre. We now
show that, outside a finite exceptional set, primitivity forces every
relevant new point to arise from such a fibre. The following result is a generalisation of Vojta's Theorem \cite[Corollary 0.3]{Vojta}.

\begin{theorem}[Song--Tucker {\cite[Proposition~2.3]{ST}}]\label{thm:ST}
Let $C$ and $C'$ be curves of genera $g$ and $g_0$ over a number
field $k$, and fix
an algebraic closure $\bar k$ of $k$. Let $\nu\geq1$, and let $f:C\to C'$ be a dominant
$k$-morphism. If
\[
 g-1>(\nu+g_0-1)\deg f,
\]
then the set
\[
 \{P\in C(\bar k):[k(P):k]=\nu,\ k(f(P))=k(P)\}
\]
is finite.
\end{theorem}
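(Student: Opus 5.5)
The plan is to play two discriminant–height inequalities against each other, one on $C$ and one on $C'$, and to conclude with Northcott. Write $d(P)=\frac{1}{[k(P):k]}\log|\Delta_{k(P)}|$ for the normalised logarithmic discriminant. Fix an ample divisor $A$ of degree $1$ on $C'$, up to numerical equivalence. On $C$ use the height $h_{f^*A}$, which is a height attached to a divisor of degree $\deg f$. All heights are Weil heights, so they are defined up to $O(1)$.

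\textbf{Step 1: lower bound for the discriminant on $C$.} I would invoke Vojta's inequality for algebraic points \cite[Corollary~0.3]{Vojta} on $C$. For fixed $\nu$ and any $\varepsilon>0$, every $P\in C(\bar k)$ with $[k(P):k]=\nu$ satisfies
\[
 h_{K_C}(P)\leq(2+\varepsilon)\,d(P)+O_\varepsilon(1).
\]
Since $K_C$ has degree $2g-2$, comparing numerical classes replaces $h_{K_C}$ by $\frac{2g-2-\varepsilon'}{\deg f}\,h_{f^*A}$, up to $O(1)$. This uses the standard fact that heights of divisors of positive degree on a curve are comparable up to arbitrarily small multiplicative error.

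\textbf{Step 2: upper bound for the discriminant on $C'$.} Now use the hypothesis $k(f(P))=k(P)$. The point $Q=f(P)$ also has degree $\nu$, it has the same residue field as $P$, and hence $d(Q)=d(P)$. On $C'$ I want an \emph{upper} bound
\[
 d(Q)\leq\bigl(2g_0-2+2\nu\bigr)h_A(Q)+O(1)
\]
for points of degree $\nu$. This is an arithmetic Riemann–Hurwitz estimate. To prove it, choose a degree-$\nu$ field generator coming from a function on $C'$ (or work on the symmetric power $\mathrm{Sym}^\nu C'$). Then bound the discriminant of $k(Q)$ by the polynomial discriminant of the minimal polynomial of a suitable coordinate of $Q$; that discriminant is controlled by the heights of the conjugates. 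The contribution $2g_0-2$ comes from the canonical class of $C'$ and the contribution $2\nu$ from the $\nu$ conjugates. Functoriality gives $h_A(Q)=h_{f^*A}(P)+O(1)$.

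\textbf{Step 3: combine and apply Northcott.} Chaining the two inequalities gives
\[
 \frac{2g-2-\varepsilon'}{\deg f}\,h_{f^*A}(P)\leq(2+\varepsilon)(\nu+g_0-1)\,h_{f^*A}(P)+O(1).
\]
The hypothesis $g-1>(\nu+g_0-1)\deg f$ lets us choose $\varepsilon,\varepsilon'$ so small that the coefficient on the left strictly exceeds the one on the right. The height $h_{f^*A}(P)$ is then bounded. Since the degree is fixed at $\nu$ and $f^*A$ is ample, Northcott's theorem leaves only finitely many such $P$.

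\textbf{Main obstacle.} I expect the main difficulty to be Step 2: getting the sharp constant $2(\nu+g_0-1)$ with no loss, so that it matches exactly the factor $2$ in Vojta's inequality. Any slack there would spoil the clean threshold $g-1>(\nu+g_0-1)\deg f$. I would first try to obtain it from Vojta's own refined inequality in the form with the term $\dim$ replaced by the relative degree over a map to $\PP^1$, adapted to $C'$. As a fallback, I would follow Song–Tucker's argument via the symmetric product.
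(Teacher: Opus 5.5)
The paper does not prove this statement; it quotes it from Song--Tucker. Your three-step architecture (a discriminant inequality on $C$, an upper bound for the discriminant on $C'$, then Northcott) is the natural one. As written, however, it does not reach the stated threshold, because Step~3 drops a factor of $2$. Chaining Steps~1 and~2 exactly as you formulated them gives
\[
 \frac{2g-2-\varepsilon'}{\deg f}\,h_{f^*A}(P)\le h_{K_C}(P)\le(2+\varepsilon)\,d(Q)\le(2+\varepsilon)\cdot 2(\nu+g_0-1)\,h_{f^*A}(P)+O(1).
\]
This bounds the height only when $g-1>2(\nu+g_0-1)\deg f$. In the paper's application ($\nu=n$, $g_0=1$) that would mean $g>2n^2+1$ instead of $g>n^2+1$.

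Step~2 cannot absorb the loss, because the constant $2(\nu+g_0-1)$ cannot be halved in general. Already on $\PP^1$, a typical algebraic number of degree $\nu$ has $d\approx(2\nu-2)h$. Your ``main obstacle'' paragraph has the bookkeeping backwards: the $2$ in $2(\nu+g_0-1)$ is cancelled by the $2$ in $2g-2$. That leaves no room for a factor $2+\varepsilon$ in front of the discriminant on $C$.

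The threshold $g-1>(\nu+g_0-1)\deg f$ needs an inequality on $C$ with coefficient $1$ in front of the discriminant. Vojta's theorem supplies one for points of bounded degree:
\[
 h_{K_C}(P)\le d_a(P)+\varepsilon h_{f^*A}(P)+O(1)\qquad\text{for }[k(P):k]\le\nu .
\]
Here $d_a$ is his arithmetic discriminant: essentially the suitably normalised Arakelov number $\bar P\cdot(\bar P+\bar\omega)$ of the closure $\bar P$ on a regular model. It dominates $d(P)$ up to $O(1)$.

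Working with $d_a$, you must replace the identity $d(P)=d(Q)$ by $d_a(P)\le d_a(Q)+O(1)$. To see this, take a model of $C$ on which $f$ extends. Since $k(P)=k(Q)$, the map $\bar P\to\bar Q$ is finite and birational, so the order $\mathcal O(\bar Q)$ sits inside $\mathcal O(\bar P)$. The archimedean terms compare because $f$ is Lipschitz.

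The sharp form of Step~2 comes from arithmetic adjunction combined with the Faltings--Hriljac Hodge index inequality $(\bar Q-\nu\bar A)^2\le 0$ (for admissibly metrised $\bar A$). Together they give $d_a(Q)\le h_{K_{C'}}(Q)+2\nu h_A(Q)+O(1)$. Your minimal-polynomial route only gives $(2\nu-2)\deg(x)\,h_A(Q)$ for a coordinate $x$ on $C'$, which is too large once $g_0\ge 1$ and $\nu\ge 3$.

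With these inputs, and writing $h=h_{f^*A}(P)$, the chain becomes $\bigl(\frac{2g-2}{\deg f}-\varepsilon''\bigr)h\le\bigl(2(\nu+g_0-1)+\varepsilon\bigr)h+O(1)$. This bounds $h$ exactly under the stated hypothesis, and Northcott finishes as you say.
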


Combining Theorem~\ref{thm:ST} with
Lemmas~\ref{lem:avoid} and~\ref{lem:spread}--\ref{lem:bc} gives the
arithmetic core of the proofs of Theorems~\ref{thm:main}
and~\ref{thm:field density}. Let $\mathcal P_{\mathrm{all}}$ be the set of all primes.

\begin{theorem}[Discriminant parity]\label{thm:parity}
Let $\psi:C\to E$ satisfy the hypotheses of Theorem~\ref{thm:main}.
There exist a finite set $\Sigma$ of rational primes and a finite set
$\mathcal F_{\mathrm{exc}}$ of $\Q$-isomorphism classes of degree $n$
fields with the following properties.
\begin{enumerate}
\item If $B_\psi=\varnothing$, then every primitive degree $n$ field
$L/\Q$ whose isomorphism class does not belong to
$\mathcal F_{\mathrm{exc}}$, and for which
$C(L)_{\news}\neq\varnothing$, satisfies
\[
 v_p(\Delta_L)\equiv0\pmod2
 \qquad\text{for every }p\notin\Sigma.
\]
\item If $B_\psi\neq\varnothing$, let $L_\psi/\Q$ be the finite
Galois extension cut out by the action of $G_\Q$ on
$B_\psi(\Qbar)$. Write
$G_\psi=\Gal(L_\psi/\Q)$ and
\[
 \alpha_\psi
 :=
 \frac{\#\{\sigma\in G_\psi:B_\psi(\Qbar)^\sigma=\varnothing\}}
      {\#G_\psi}.
\]
Then $0<\alpha_\psi<1$, and there is a regular set
$\mathcal S_0\subseteq\mathcal P_{\mathrm{all}}\setminus\Sigma$ of
density $\alpha_\psi$ such that every primitive degree $n$ field
$L/\Q$ whose isomorphism class does not belong to
$\mathcal F_{\mathrm{exc}}$, and for which
$C(L)_{\news}\neq\varnothing$, satisfies
\[
 v_p(\Delta_L)\equiv0\pmod2
 \qquad\text{for every }p\in\mathcal S_0.
\]
\end{enumerate}
The same conclusions hold, with the same proof, for a finite morphism $\psi:C\to Y$ of degree $n$ to a smooth projective geometrically integral curve $Y/\Q$ of genus $g_0$, provided that the hypothesis on $B_\psi$ is kept and the genus hypothesis is replaced by $g(C)-1>n(n+g_0-1)$.
\end{theorem}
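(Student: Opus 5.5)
We first reduce to rational fibres, then read off the parity of $v_p(\Delta_L)$ from the discriminant of the specialised algebra via Lemmas~\ref{lem:order} and~\ref{lem:bc}.

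\textbf{Reduction to rational fibres.} Take $\Sigma$ as in Lemma~\ref{lem:spread}. Let $L$ be primitive of degree $n$ with $C(L)_{\news}\neq\varnothing$, and choose $P$ with $\Q(P)=L$. Then $\Q(\psi(P))\subseteq L$, so by primitivity either $\Q(\psi(P))=\Q$ or $\Q(\psi(P))=L$. Apply Theorem~\ref{thm:ST} to $f=\psi$, $C'=E$ (so $g_0=1$) and $\nu=n$. The condition $g-1>(n+1-1)n=n^2$ is exactly the hypothesis $g>n^2+1$; in the general $Y$ version it is $g-1>n(n+g_0-1)$. Hence only finitely many $P$ of degree $n$ have $\Q(\psi(P))=\Q(P)$. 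We put their fields into $\mathcal F_{\mathrm{exc}}$. For every other $L$ the point $R:=\psi(P)$ lies in $E(\Q)$.

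\textbf{Identifying the discriminant.} By Lemma~\ref{lem:order}, $C_R\simeq\Spec L$ and $A_R$ is a full $\Z[1/\Sigma]$-order in $L$. In particular the generic fibre of $\mathcal A_R$ is étale, so Lemma~\ref{lem:bc} applies. For $p\notin\Sigma$, the index formula $\mathfrak d(A_R)=[\OO_L[1/\Sigma]:A_R]^2\,\Delta_L$ gives
\[
v_p(\Delta_L)\equiv v_p\bigl(\mathfrak d(A_R)\bigr)=i_{s_R(p)}\bigl(s_R(U),D_\Psi\bigr)\pmod 2 .
\]
By Lemma~\ref{lem:spread}, $D_\Psi=\sum_j m_j\mathcal Z_j$ with the $\mathcal Z_j$ pairwise disjoint. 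So the intersection number at $s_R(p)$ equals $m_j\,i_{s_R(p)}(s_R(U),\mathcal Z_j)$ for the unique $j$ with $s_R(p)\in\mathcal Z_j$, and is $0$ if there is no such $j$. If $m_j$ is even, this is even. The intersection number can only be odd when $s_R(p)\in\mathcal B_\psi$, the union of the $\mathcal Z_j$ with $m_j$ odd.

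\textbf{Concluding the two cases.} If $B_\psi=\varnothing$, then $\mathcal B_\psi=\varnothing$, and (1) follows for all $p\notin\Sigma$.

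If $B_\psi\neq\varnothing$, apply Lemma~\ref{lem:avoid} to the finite étale $\mathcal B_\psi\to U$, whose generic fibre is $B_\psi$; the derangement hypothesis is exactly what that lemma needs. It gives $0<\alpha_\psi<1$ and a Frobenian, hence regular, set $\mathcal S_0$ of primes $p\notin\Sigma$ of density $\alpha_\psi$ with $\mathcal B_{\psi,p}(\FF_p)=\varnothing$. The point $s_R(p)$ is an $\FF_p$-rational point of the fibre $\mathcal E_p$. If it lay on $\mathcal B_\psi$, it would give a point of $\mathcal B_{\psi,p}(\FF_p)$. Therefore $s_R(p)\notin\mathcal B_\psi$ for $p\in\mathcal S_0$, and $v_p(\Delta_L)$ is even there. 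The field $L_\psi$ is the field cut out by $B_\psi(\Qbar)$, as required.

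\textbf{Main obstacle.} The delicate step is the parity computation: making sure the index relation holds after inverting $\Sigma$, and that intersection multiplicity is additive over the disjoint components so that even-multiplicity components contribute even amounts. This is where we use that the $\mathcal Z_j$ are disjoint and that $D_\Psi$ has no vertical part.
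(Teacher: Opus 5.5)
Your proposal is correct and follows the paper's proof essentially step for step. You use Song--Tucker to force $R=\psi(P)\in E(\Q)$ outside $\mathcal F_{\mathrm{exc}}$. You then use Lemmas~\ref{lem:order} and~\ref{lem:bc} with the index--discriminant formula to get $v_p(\Delta_L)\equiv\sum_j m_j\, i_{s_R(p)}(s_R(U),\mathcal Z_j)\pmod 2$, and Lemma~\ref{lem:avoid} to exclude $s_R(p)\in\mathcal B_\psi$ for $p\in\mathcal S_0$. The only cosmetic difference is that you invoke the disjointness of the $\mathcal Z_j$ to isolate a single component, whereas the paper checks each term of the sum separately.
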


\begin{proof}
Consider the set
\[
 \Xi_n(\psi)
 =
 \left\{
 P\in C(\Qbar):
 [\Q(P):\Q]=n,\ \Q(\psi(P))=\Q(P)
 \right\}.
\]
We start by applying Theorem~\ref{thm:ST} with $g_0=1$, $\nu=n$, and
$\deg\psi=n$. Its numerical hypothesis is exactly $g(C)>n^2+1$, so
$\Xi_n(\psi)$ is finite. Let $\mathcal F_{\mathrm{exc}}$ be the
finite set of $\Q$-isomorphism classes of the fields $\Q(P)$, with
$P\in\Xi_n(\psi)$.

Choose the integral model given by Lemma~\ref{lem:spread}, and retain
the resulting set $\Sigma$. If $B_\psi\neq\varnothing$, apply
Lemma~\ref{lem:avoid}, with $N=\prod_{p\in\Sigma}p$, to the finite
\'etale morphism
\[
 \mathcal B_\psi\to U=\Spec\Z[1/\Sigma].
\]
This gives a regular set
$\mathcal S_0\subseteq\mathcal P_{\mathrm{all}}\setminus\Sigma$ of
density $\alpha_\psi$ such that
\[
 \mathcal B_{\psi,p}(\FF_p)=\varnothing
 \qquad(p\in\mathcal S_0).
\]
For the rest of the proof, let
\[
 \mathcal S
 :=
 \begin{cases}
 \mathcal P_{\mathrm{all}}\setminus\Sigma,
   & B_\psi=\varnothing,\\
 \mathcal S_0,
   & B_\psi\neq\varnothing.
 \end{cases}
\]

Let $L/\Q$ be a primitive field of degree $n$, whose
$\Q$-isomorphism class does not belong to
$\mathcal F_{\mathrm{exc}}$, and choose
$P\in C(L)_{\news}$. Choose a $\Q$-embedding
$L\hookrightarrow\Qbar$, and use it to regard $P$ as a point of
$C(\Qbar)$. Let $R=\psi(P)$. Since
$\Q(R)\subseteq\Q(P)=L$ and $L/\Q$ is primitive, either
$\Q(R)=\Q$ or $\Q(R)=L$. The second possibility would imply
$P\in\Xi_n(\psi)$, contradicting the choice of $L$. Hence
$R\in E(\Q)$.

By properness, $R$ extends to a section $s_R:U\to\mathcal E$. Moreover, 
Lemma~\ref{lem:order} shows that the generic fibre above $R$ is
$\Spec L$ and that
$
 A_R
 =
 \Gamma\bigl(U,s_R^*(\Psi_*\mathcal O_{\mathcal C})\bigr)
$
is a full $\Z[1/\Sigma]$-order in $L$; in particular its generic fibre
$\Spec L$ is \'etale over $\Q$, as required in Lemma~\ref{lem:bc}.

Let
$
 D_\Psi=\sum_i m_i\mathcal Z_i
$
be as in Lemma~\ref{lem:spread}, and denote by
$$
 \mathfrak d(A_R)
 :=
 \mathfrak d\bigl(A_R/\Z[1/\Sigma]\bigr).$$
Then, for $p\in\mathcal S$, Lemma~\ref{lem:bc} gives
\[
 v_p\bigl(\mathfrak d(A_R)\bigr)
 =
 \sum_i m_i\,i_{s_R(p)}\bigl(s_R(U),\mathcal Z_i\bigr).
\]
Suppose that $m_i$ is odd. This cannot occur when
$B_\psi=\varnothing$. If $B_\psi\neq\varnothing$ and the
corresponding intersection multiplicity were positive, then
\[
 s_R(p)
 \in
 \mathcal Z_{i,p}(\FF_p)
 \subseteq
 \mathcal B_{\psi,p}(\FF_p),
\]
contrary to the definition of $\mathcal S_0$. Thus every term on the
right contributes an even integer, and for all $p\in\mathcal S$,
$
 v_p\bigl(\mathfrak d(A_R)\bigr)\equiv0\pmod2.
$

Finally, since $A_R$ is a full $\Z[1/\Sigma]$-order in $L$, the 
local index--discriminant formula (e.g. \cite[p.~226, (7-3)]{Stevenhagen}) gives, for every $p\notin\Sigma$,
\[
 v_p\bigl(\mathfrak d(A_R)\bigr)
 =
 v_p(\Delta_L)
 +
 2\operatorname{length}_{\Z_p}
 \left(
 \frac{\mathcal O_L\otimes_\Z\Z_p}
      {A_R\otimes_{\Z[1/\Sigma]}\Z_p}
 \right).
\]

Therefore for all $p\in\mathcal S$, we have that
$
 v_p(\Delta_L)\equiv0\pmod2,
$
which proves both assertions.
\end{proof}

\subsection{Proof of Theorem~\ref{thm:main}}

We first count the possible absolute discriminants.
Suppose that $B_\psi=\varnothing$, and let $\Sigma$ be the finite set
of primes given by Theorem~\ref{thm:parity}. Let
\[
 q_\Sigma:=\prod_{p\in\Sigma}p.
\]
Apart from the finitely many discriminants arising from
$\mathcal F_{\mathrm{exc}}$, every
$m\in\mathcal D_{C,n}(X)$ has even valuation at every
$p\notin\Sigma$. Hence 
$
 m=dr^2
$
with $d\mid q_\Sigma$ and $r\in\Z_{>0}$. It follows that
\[
 \#\mathcal D_{C,n}(X)
 \leq
 \#\mathcal F_{\mathrm{exc}}
 +\sum_{d\mid q_\Sigma}
   \left\lfloor\sqrt{\frac Xd}\right\rfloor
 \leq
 \#\mathcal F_{\mathrm{exc}}
 +X^{1/2}\sum_{d\mid q_\Sigma}\frac{1}{\sqrt d}
 \ll_\psi X^{1/2}.
\]
This proves assertion~(1) of Theorem~\ref{thm:main}.

Now suppose that $B_\psi\neq\varnothing$, and let $\mathcal S_0$ be
the regular set of primes of density $\alpha_\psi$ furnished by
Theorem~\ref{thm:parity}. Let
\[
 N_{\mathcal S_0}(X)
 :=
 \#\left\{ m \in \Z_{\geq 0}: \,
 m\leq X, \,
 v_p(m)\equiv0\pmod2
 \text{ for every }p\in\mathcal S_0
 \right\}.
\]
By \cite[Theorem~10]{KS}, there is a constant
$c_{\mathcal S_0}>0$ such that
\[
 N_{\mathcal S_0}(X)
 \sim
 c_{\mathcal S_0}\frac{X}{(\log X)^{\alpha_\psi}}.
\]
By Theorem~\ref{thm:parity}, every element of
$\mathcal D_{C,n}(X)$, apart from the finitely many absolute
discriminants arising from $\mathcal F_{\mathrm{exc}}$, is counted by
$N_{\mathcal S_0}(X)$. Hence
\[
 \#\mathcal D_{C,n}(X)
 \ll_\psi
 \frac{X}{(\log X)^{\alpha_\psi}}.
\]
This proves assertion~(2) of Theorem~\ref{thm:main}.
\begin{remark}\label{rem:general-target}
The proof uses only the fact that $E$ is a smooth projective geometrically
integral curve and its genus. More generally, let
$\psi:C\to Y$ be a finite morphism of degree $n$ to a curve of genus
$g_0$, and assume that either $B_\psi=\varnothing$, or that the Galois action on $B_\psi(\Qbar)$ contains a derangement. The conclusions of Theorems~\ref{thm:main}
and~\ref{thm:field density} hold, with the same definition of the odd locus, under the
numerical hypothesis
\[
 g(C)-1>n(n+g_0-1).
\]
For $Y=\PP^1$, this becomes $g(C)>n^2-n+1$, and
Proposition~\ref{prop:KS-dictionary} recovers the projective line
condition of \cite[Theorem~9]{KS}. For $g_0=1$, it is precisely the
hypothesis of Theorem~\ref{thm:main}.
The same bound occurs in Castelnuovo--Severi: if $g(C)>n^2+1$, then
any two degree $n$ maps from $C$ to elliptic curves factor through a
common map of degree greater than $1$ \cite[Theorem~3.11.3]{Sti}; for
prime $n$ they therefore differ by an isomorphism of their targets.
This observation is not used in the proofs.
If $g(Y)\geq 2$, then $Y(\mathbb Q)$ is finite by Faltings'
Theorem. Assuming that $\Xi_n(\psi)$ is finite, all but finitely many primitive degree $n$ points of $C$ lie above $Y(\Q)$. Since $\psi$ is finite, there are therefore only
finitely many such points, making our density question vacuous in this
case.
\end{remark}

\begin{remark}\label{rem:Xi-finite}
The numerical genus hypothesis is used only to prove, by
Theorem~\ref{thm:ST}, that
\[
\Xi_n(\psi)
:=
\left\{
P\in C(\Qbar):
[\Q(P):\Q]=n,\ 
\Q(\psi(P))=\Q(P)
\right\}
\]
is finite.  Thus, with all the other hypotheses unchanged, the conclusions of Theorems~\ref{thm:parity},
\ref{thm:main}, and~\ref{thm:field density} remain valid whenever the finiteness of
$\Xi_n(\psi)$ is known by another method.
\end{remark}

\section{Quartic and quintic fields}
\label{sec:qqext}

In this section, we count densities of fields with prescribed parity conditions on their discriminants, for degrees $4$ and $5$. We then use these results, together with Theorem~\ref{thm:parity}, to prove Theorem~\ref{thm:field density} and Corollary~\ref{cor:P1}.
Throughout this section, number fields are counted up to
$\Q$-isomorphism.
\subsection{Tame local masses}
Recall that every degree $n$ \'etale $\mathbb Q_p$-algebra has a decomposition
$
A=\prod_{i=1}^r K_i
$
into finite extensions $K_i/\mathbb Q_p$, unique up to permutation. Writing $\mathcal O_A=\prod_i\mathcal O_{K_i}$, we define $\operatorname{Disc}(A/\mathbb Q_p)$ to be the discriminant ideal of $\mathcal O_A/\mathbb Z_p$, equivalently the discriminant of its trace pairing. Thus
$
v_p\!\left(\operatorname{Disc}(A/\mathbb Q_p)\right)
 =\sum_i v_p\!\left(\operatorname{Disc}(K_i/\mathbb Q_p)\right).
$

Isomorphism classes of degree $n$ \'etale $\Q_p$-algebras correspond
to $S_n$-conjugacy classes of continuous homomorphisms $
\rho:G_{\Q_p}\to S_n$;
see \cite[Lemma~3.1]{Kedlaya}. Suppose that $p>n$. Since wild inertia
is pro-$p$ and $p\nmid n!$, its image under $\rho$ is trivial.
Thus $\rho$ factors through the tame quotient. In that quotient,
choose a topological generator $t$ of inertia and a lift
$\operatorname{Frob}_p$ of arithmetic Frobenius. Denote by 
$\tau:=\rho(t)$ and $\phi:=\rho(\operatorname{Frob}_p)$. Then we have
\begin{equation}\label{eq:conjugated}
\phi\tau\phi^{-1}=\tau^p.
\end{equation}
The splitting type of $A$ is the multiset $\{(e_i,f_i)\}_i$, where $e_i$ and $f_i$ are the ramification index and residue degree of $K_i/\mathbb Q_p$; in particular, $\sum_i e_if_i=n$. We write this type as $(e_1,f_1)+\cdots +(e_r,f_r)$. If all the factors are tamely ramified, then
\begin{equation}\label{eq:discA}
v_p\!\left(\operatorname{Disc}(A/\mathbb Q_p)\right)
=\sum_i f_i(e_i-1)=\sum_i f_ie_i-\sum_i f_i=n-\#\{\text{cycles of }\tau\}.
\end{equation}
The following calculation is a tame specialisation of Bhargava's mass
formula \cite[Theorem~1.1]{BhaLocalMass}, using Kedlaya's description
in terms of local Galois representations \cite{Kedlaya}. For a prime
$p$ and an integer $n\geq1$, let $\mathcal E_n(p)$ be a set of
representatives for the isomorphism classes of degree $n$ \'etale
$\Q_p$-algebras, and let
\[
 M_n(p)
 :=
 \sum_{A\in\mathcal E_n(p)}
 \frac{p^{-v_p(\Disc(A/\Q_p))}}
      {\#\operatorname{Aut}_{\Q_p}(A)}.
\]

\begin{lemma}
\label{lem:qqext-local-masses}
For $p>3$, one has
\[
 M_4(p)=1+p^{-1}+2p^{-2}+p^{-3},
\]
and the contribution from the algebras having even discriminant
exponent is
\[
 M_4^{\mathrm{even}}(p)=1+2p^{-2}.
\]
For $p>5$, one has
\[
 M_5(p)=1+p^{-1}+2p^{-2}+2p^{-3}+p^{-4},
\]
and the contribution from the algebras having even discriminant
exponent is
\[
 M_5^{\mathrm{even}}(p)=1+2p^{-2}+p^{-4}.
\]
\end{lemma}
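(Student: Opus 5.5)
The plan is to compute $M_n(p)$ as a sum over permutation data, using Kedlaya's correspondence \cite[Lemma~3.1]{Kedlaya} together with the tame reduction set up just before the lemma. The final formula will be
\[
 M_n(p)=\sum_{\lambda\vdash n}p^{-(n-\ell(\lambda))}\qquad(p>n),
\]
where $\ell(\lambda)$ is the number of parts of the partition $\lambda$. The lemma then follows by listing partitions.

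\emph{Step 1: from algebras to homomorphisms.} Isomorphism classes of degree $n$ étale $\Q_p$-algebras correspond to $S_n$-orbits, under conjugation, of continuous homomorphisms $\rho:G_{\Q_p}\to S_n$. The automorphism group $\operatorname{Aut}_{\Q_p}(A)$ is identified with the centraliser in $S_n$ of $\rho(G_{\Q_p})$, which is the stabiliser of $\rho$. By orbit--stabiliser, the orbit of $\rho$ has size $n!/\#\operatorname{Aut}(A)$. Hence
\[
 M_n(p)=\frac1{n!}\sum_{\rho}p^{-v_p(\Disc(A_\rho/\Q_p))},
\]
where the sum runs over all continuous homomorphisms $\rho$.

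\emph{Step 2: parametrise the tame homomorphisms.} Since $p>n$, every $\rho$ factors through the tame quotient. The tame quotient is topologically generated by $t$ and $\operatorname{Frob}_p$, subject only to the relation $\operatorname{Frob}_p\,t\,\operatorname{Frob}_p^{-1}=t^p$. Homomorphisms to the finite group $S_n$ are therefore exactly the pairs $(\phi,\tau)\in S_n^2$ satisfying \eqref{eq:conjugated}; continuity is automatic because the target is finite. I expect this step to be the main point needing care. The issue is to check that the profinite presentation of the tame group imposes no further constraint on finite quotients of order prime to $p$. This holds because tame inertia is $\prod_{\ell\neq p}\Z_\ell(1)$, so $\tau$ may be any element of $S_n$ (its order is prime to $p$), and the only relation is \eqref{eq:conjugated}.

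\emph{Step 3: count the pairs.} By \eqref{eq:discA}, the weight attached to $(\phi,\tau)$ depends only on $\tau$ and equals $p^{-(n-c(\tau))}$, where $c(\tau)$ is the number of cycles of $\tau$. Fix $\tau$. Since $p>n$, $p$ is coprime to every cycle length of $\tau$, so $\tau^p$ has the same cycle type as $\tau$ and is conjugate to it. The set of $\phi$ satisfying $\phi\tau\phi^{-1}=\tau^p$ is therefore a coset of the centraliser $Z(\tau)$, and has size $\#Z(\tau)$. Summing over $\tau$ gives
\[
 M_n(p)=\frac1{n!}\sum_{\tau\in S_n}\#Z(\tau)\,p^{-(n-c(\tau))}.
\]
Grouping the $\tau$ by conjugacy class, each class contributes $\#(\text{class})\cdot\#Z(\tau)/n!=1$ times its weight. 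Conjugacy classes of $S_n$ correspond to partitions $\lambda\vdash n$, with $c(\tau)=\ell(\lambda)$, which gives the displayed formula.

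\emph{Step 4: read off the values.}

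For $n=4$, the partitions have $4,3,2,2,1$ parts, namely $1^4$, $21^2$, $31$, $2^2$, $4$. This gives
\[
 M_4(p)=1+p^{-1}+2p^{-2}+p^{-3}.
\]
The even-exponent terms are those with exponent $0$ or $2$, giving $M_4^{\mathrm{even}}(p)=1+2p^{-2}$.

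For $n=5$, the partitions are $1^5$, $21^3$, $31^2$, $2^21$, $41$, $32$, $5$, with exponents $0,1,2,2,3,3,4$. This gives
\[
 M_5(p)=1+p^{-1}+2p^{-2}+2p^{-3}+p^{-4}.
\]
Keeping the exponents $0$, $2$ and $4$ gives $M_5^{\mathrm{even}}(p)=1+2p^{-2}+p^{-4}$.

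The even-part statements restrict the same bijection of Steps 1--3 to those $\rho$ whose algebra has even discriminant exponent. This is legitimate because the exponent is a conjugation invariant of $\rho$.
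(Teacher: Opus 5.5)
Your proposal is correct and takes essentially the paper's approach. The paper cites Kedlaya's tame mass formula and then gives the same direct count of pairs $(\phi,\tau)$ with $\phi\tau\phi^{-1}=\tau^p$, using orbit--stabiliser to convert to $1/\#\operatorname{Aut}$ weights, so that each conjugacy class of $S_n$ contributes $p^{-(n-\#\text{cycles of }\tau)}$, and it reads off the values from the same partition tables; your extra justification of the tame presentation and of the coset structure of the solutions $\phi$ only fills in details the paper leaves implicit.
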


 \begin{proof}
In each case $p>n$, so every algebra under consideration is tame.
Let $e(\tau):=n-\#\{\text{cycles of }\tau\}$, be the index of $\tau$ (where we count fixed points
as cycles). By \eqref{eq:discA}, this is its discriminant exponent. We now apply
Kedlaya's tame mass formula
\cite[Proposition~5.3 and Corollary~5.4]{Kedlaya} to $S_n$. It
assigns the mass
$p^{-e(\tau)}$ to each conjugacy class for which $\tau^p$ is
conjugate to $\tau$. Every class has this property, since all cycle
lengths are at most $n<p$.

A direct way to see this is as follows. There are $n!/\#C_{S_n}(\tau)$ choices for an
inertia element in its conjugacy class and, for each choice,
$\#C_{S_n}(\tau)$ Frobenius elements satisfying
$\phi\tau\phi^{-1}=\tau^p$. Dividing the number of labelled pairs
by $n!$ is equivalent, by the orbit-stabiliser theorem, to weighting each
isomorphism class by the reciprocal of its automorphism group.
Thus the contribution is exactly $p^{-e(\tau)}$.

The cycle types and their indices are given in the following tables.
\[
\begin{array}{c@{\qquad}ccccc}
\toprule
n=4&1^4&2\,1^2&3\,1&2^2&4\\
\midrule
e(\tau)&0&1&2&2&3\\
\bottomrule
\end{array}
\]
\[
\begin{array}{c@{\qquad}ccccccc}
\toprule
n=5&1^5&2\,1^3&2^2\,1&3\,1^2&4\,1&3\,2&5\\
\midrule
e(\tau)&0&1&2&2&3&3&4\\
\bottomrule
\end{array}
\]
Summing up over all conjugacy classes gives the values for $M_n(p)$, while summing over the even $e(\tau)$ gives $M_n^{\mathrm{even}}(p)$.
\end{proof}

\begin{lemma}\label{lem:q4q5}
Fix $n\in\{4,5\}$. For primes $p>n$, write $
 q_n(p):=\frac{M_n^{\mathrm{even}}(p)}{M_n(p)}$. Let $\mathcal S$ be a regular set of primes of positive density,
all greater than $n$, and let
\[
 T_1\subseteq T_2\subseteq\cdots\subseteq\mathcal S,
 \qquad
 \mathcal S=\bigcup_{m\geq1}T_m,
\]
be an increasing sequence of finite subsets. Then
\(
 \lim_{m\to\infty}\prod_{p\in T_m}q_n(p)=0.
\)
\end{lemma}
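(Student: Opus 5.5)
We compute $1-q_n(p)$ explicitly and show the product diverges to $0$ because $\sum_{p\in\mathcal S}p^{-1}=\infty$. By Lemma~\ref{lem:qqext-local-masses},
\[
 M_n(p)-M_n^{\mathrm{even}}(p)=
 \begin{cases}
 p^{-1}+p^{-3}, & n=4,\\
 p^{-1}+2p^{-3}, & n=5,
 \end{cases}
\]
which is the contribution of the odd-index classes ($2\,1^2$ and $4$ when $n=4$; $2\,1^3$, $4\,1$ and $3\,2$ when $n=5$). Since $M_n(p)\leq 1+p^{-1}+2p^{-2}+2p^{-3}+p^{-4}\leq 2$ for $p>n$, we get
\[
 1-q_n(p)=\frac{M_n(p)-M_n^{\mathrm{even}}(p)}{M_n(p)}\geq \frac{p^{-1}}{2}.
\]
Also $0<q_n(p)<1$, because $M_n^{\mathrm{even}}(p)\geq1>0$ and the odd contribution is positive.

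Using $\log(1-x)\leq -x$ for $x\in[0,1)$, we then obtain
\[
 \log\prod_{p\in T_m}q_n(p)\leq -\sum_{p\in T_m}\bigl(1-q_n(p)\bigr)\leq -\frac12\sum_{p\in T_m}\frac1p .
\]
Because the $T_m$ increase and exhaust $\mathcal S$, the right-hand side tends to $-\frac12\sum_{p\in\mathcal S}p^{-1}$. The proof is therefore reduced to showing that this series diverges.

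The only real input is divergence of $\sum_{p\in\mathcal S}p^{-1}$, and it follows from regularity. By definition, $\sum_{p\in\mathcal S}p^{-s}=\alpha\log\frac1{s-1}+\theta(s)$ with $\alpha>0$ and $\theta$ holomorphic on $\operatorname{Re}(s)\geq1$, hence bounded as $s\to1^+$. So $\sum_{p\in\mathcal S}p^{-s}\to\infty$ as $s\to1^+$. If $\sum_{p\in\mathcal S}p^{-1}$ were finite, then monotone comparison ($p^{-s}\leq p^{-1}$ for $s>1$) would bound $\sum_{p\in\mathcal S}p^{-s}$ uniformly, a contradiction. Hence $\log\prod_{p\in T_m}q_n(p)\to-\infty$, and the product tends to $0$.

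I expect no serious obstacle. The steps needing care are the uniform lower bound $1-q_n(p)\gg p^{-1}$, which comes straight from the tables, and the passage from regular density to divergence of the reciprocal sum, which is the short Abelian argument above.
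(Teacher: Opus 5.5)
Your proof is correct and follows the paper's argument: take logarithms, use the explicit masses of Lemma~\ref{lem:qqext-local-masses} to compare $\log q_n(p)$ with $-1/p$, and conclude from the divergence of $\sum_{p\in\mathcal S}p^{-1}$ that regularity gives. Your one-sided bound $\log q_n(p)\leq -(1-q_n(p))\leq -\frac{1}{2p}$ is a slightly cleaner substitute for the paper's asymptotic $\log q_n(p)=-1/p+O(p^{-2})$. You also spell out the short Abelian argument for divergence, which the paper only asserts.
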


\begin{proof}
By Lemma~\ref{lem:qqext-local-masses}, we have $0<q_n(p)<1$ with
\[
 q_n(p)=1-\frac1p+O(p^{-2}), \text{ and } 
 \log q_n(p)=-\frac1p+O(p^{-2}).
\]
Moreover, regularity of positive density implies that 
$\sum_{p\in\mathcal S}p^{-1}=\infty$. Since
$\sum_p p^{-2}<\infty$, we have
\[
 \log\prod_{p\in T_m}q_n(p)
 =-\sum_{p\in T_m}\frac1p
  +O\left(\sum_{p\in T_m}\frac1{p^2}\right)
 \to-\infty,
\]
as $m \to \infty$.
Exponentiating the above equation proves the result.
\end{proof}

\subsection{Fields with parity-restricted discriminant}

In this section we count fields of degrees $5$ and $4$, ordered by discriminant, with local specifications.

We start with the quintic counting theorem. The proof follows the finite local condition argument used by
Khawaja--Siksek~\cite[Corollary~18]{KS} for cubic fields (relying on the cubic counting theorem in Bhargava--Taniguchi--Thorne~\cite{BTT}). In our case, we use the quintic local factors supplied by Lemma~\ref{lem:qqext-local-masses} and the counting
theorem of McGown--Thorne--Tucker~\cite{MTT23}, which is an extension of 
\cite[Theorems~2.4 and~5.1]{EPW17}.
\begin{theorem}
\label{thm:qqext-quintic-parity}
Let $\mathcal S$ be a regular set of rational primes of positive
density.  Then
\[A_{5,\mathcal S}(X):=
 \#\left\{L/\Q:
 \begin{array}{l}
 [L:\Q]=5,\quad |\Delta_L|\leq X,\\
 v_p(\Delta_L)\equiv0\pmod2
       \text{ for every }p\in\mathcal S
 \end{array}
 \right\}
 =o(X).
\]
In particular, these fields have density $0$ among all quintic fields
ordered by absolute discriminant.
\end{theorem}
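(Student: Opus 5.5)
The argument follows the finite-local-condition strategy of \cite[Corollary~18]{KS}: impose the parity condition at only finitely many primes, count those quintic fields asymptotically with local densities, and then let the finite set grow. Discarding the finitely many primes $p\leq5$ only shrinks $\mathcal S$ by a finite set, which keeps it regular of the same positive density and enlarges the set being counted, so we may assume every $p\in\mathcal S$ exceeds $5$. Then choose an increasing exhaustion $T_1\subseteq T_2\subseteq\cdots$ of $\mathcal S$ by finite sets, say $T_m=\mathcal S\cap[1,m]$. For each $m$ we have the trivial inequality
\[
 A_{5,\mathcal S}(X)\leq A_{5,T_m}(X),
\]
where $A_{5,T_m}(X)$ counts quintic fields with $|\Delta_L|\leq X$ and $v_p(\Delta_L)$ even for all $p\in T_m$.

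The key input is the McGown--Thorne--Tucker counting theorem \cite[Theorem~6]{MTT23}. It says that for quintic fields with finitely many local specifications $\Sigma_p$ (each a set of étale $\Q_p$-algebras, here those of even discriminant exponent), one has
\[
 \#\{L:|\Delta_L|\leq X,\ L\otimes\Q_p\in\Sigma_p\ \forall p\in T\}\sim c_5\prod_{p\in T}\frac{\sum_{A\in\Sigma_p}p^{-v_p(\Disc A)}/\#\operatorname{Aut}(A)}{M_5(p)}\,X.
\]
Here $c_5$ is the constant in the quintic count of Bhargava, and the local factors are exactly the Bhargava mass ratios. The even-exponent condition at $p$ depends only on the isomorphism class of $L\otimes\Q_p$, and the set of such algebras is a finite union of local specifications. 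Its local factor is therefore $q_5(p)=M_5^{\mathrm{even}}(p)/M_5(p)$, computed in Lemma~\ref{lem:qqext-local-masses}. This gives
\[
 \limsup_{X\to\infty}\frac{A_{5,\mathcal S}(X)}{X}\leq c_5\prod_{p\in T_m}q_5(p)
\]
for every $m$.

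Letting $m\to\infty$ and applying Lemma~\ref{lem:q4q5} with $n=5$, the right-hand side tends to $0$, so $A_{5,\mathcal S}(X)=o(X)$. Since the total number of quintic fields with $|\Delta_L|\leq X$ is $\sim c_5X$ with $c_5>0$, the density statement follows immediately.

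The main obstacle is verifying that the cited theorem applies in this form. Specifically, one must check that \cite[Theorem~6]{MTT23} allows an arbitrary finite set of local conditions, each being a union of isomorphism classes of étale algebras, with main term given by the product of the normalized Bhargava masses. One must also confirm that its normalization of masses, with $1/\#\operatorname{Aut}$ weights, matches the $M_5(p)$ of Lemma~\ref{lem:qqext-local-masses}. If the theorem only handles one specification at a time, I would decompose the condition on $T_m$ into the finitely many tuples of individual algebras $(A_p)_{p\in T_m}$ and sum the resulting asymptotics. The error terms are irrelevant here, since $m$ is fixed before $X\to\infty$.
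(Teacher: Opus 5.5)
Your proposal is correct and follows the paper's proof. The paper likewise discards $p\le 5$ and, for a fixed finite $T\subseteq\mathcal S$, applies \cite[Theorem~6]{MTT23}: it splits the even-exponent condition into finitely many splitting types per prime and sums over the three signatures, obtaining the main term $c_5\prod_{p\in T}q_5(p)X$. It then concludes with Lemma~\ref{lem:q4q5} exactly as you do.
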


\begin{proof}
Without loss of generality, we may assume that $\mathcal S$ contains no prime $p\leq5$.
Let $T\subseteq\mathcal S$ be finite. For each prime $p\in T$ let 
\[
\mathcal{E}_p
=
\left\{
A/\mathbb{Q}_p \text{ \'etale of degree } 5 :
v_p\bigl(\operatorname{Disc}(A/\mathbb{Q}_p)\bigr)
\equiv 0 \pmod{2}
\right\}.
\]
Because $p>5$, every such algebra is tamely ramified, and thus whether the exponent is even depends only on the splitting type
$\bigl((e_j,f_j)\bigr)_j$ by \eqref{eq:discA}. Let $\mathcal{T}_p^{\mathrm{even}}$ denote the set of splitting types giving $v_p(\Disc (A/\Q_p))$ even. Hence
\[
\mathcal E_p
=
\bigsqcup_{\lambda\in\mathcal T_p^{\mathrm{even}}}
\Sigma_p(\lambda),
\]
where $\Sigma_p(\lambda)$ is the set of local algebras having the single
splitting type $\lambda$.

Choose a tuple
\[
 \boldsymbol\lambda=(\lambda_p)_{p\in T}
 \in\prod_{p\in T}\mathcal T_p^{\mathrm{even}},
\]
and let
\[
 N(X;\boldsymbol\lambda)
 :=\#\left\{L/\Q:
 \begin{array}{l}
 [L:\Q]=5,\quad |\Delta_L|\leq X,\\
 L\otimes_{\Q}\Q_p\in\Sigma_p(\lambda_p)
       \text{ for every }p\in T
 \end{array}
 \right\}.
\]
We apply \cite[Theorem~6]{MTT23} with
$e=\prod_{p\in T}p$ and
$\Sigma=(\Sigma_p(\lambda_p))_{p\mid e}$, and sum over the three
signatures $i\in\{0,1,2\}$. This gives
\[
 N(X;\boldsymbol\lambda)
 =c_5X\prod_{p\in T}c_p(\lambda_p)+o_T(X),
\]
where $c_5:=\sum_{i=0}^2 C^{(i)}>0$, in the notation of
\textit{loc. cit.}, and
\[
 c_p(\lambda_p)
 =\frac{1}{M_5(p)}
  \sum_{A\in\Sigma_p(\lambda_p)}
  \frac{p^{-v_p(\Disc(A/\Q_p))}}
       {\#\operatorname{Aut}_{\Q_p}(A)}.
\]
The cited theorem uses $|\Delta_L|<Y$; taking
$Y=\lfloor X\rfloor+1$ gives the convention used here without
changing the asymptotic. For fixed $T$, there are only finitely
many allowed tuples, so the sum of their error terms is still
$o_T(X)$. Let
\[
 N_{5,T}(X)
 :=\#\left\{L/\Q:
 \begin{array}{l}
 [L:\Q]=5,\quad |\Delta_L|\leq X,\\
 v_p(\Delta_L)\equiv0\pmod2
       \text{ for every }p\in T
 \end{array}
 \right\}.
\]
The disjoint decomposition of $\mathcal E_p$ by splitting type and
Lemma~\ref{lem:qqext-local-masses} give
\[
 \sum_{\lambda_p\in\mathcal T_p^{\mathrm{even}}}c_p(\lambda_p)
 =\frac{M_5^{\mathrm{even}}(p)}{M_5(p)}
 =q_5(p).
\]
Summing over all
$\boldsymbol\lambda\in\prod_{p\in T}\mathcal T_p^{\mathrm{even}}$
therefore gives
\begin{equation}
\label{eq:qqext-quintic-fixed}
 N_{5,T}(X)
 =c_5\left(\prod_{p\in T}q_5(p)\right)X+o_T(X).
\end{equation}
For every fixed finite $T\subseteq\mathcal S$, we have
$
 A_{5,\mathcal S}(X)\leq N_{5,T}(X),
$
and hence by \eqref{eq:qqext-quintic-fixed}
\[
 \limsup_{X\to\infty}\frac{A_{5,\mathcal S}(X)}X
 \leq c_5\prod_{p\in T}q_5(p).
\]
By Lemma~\ref{lem:q4q5}, 
the product on the right tends to zero as $T$ increases through the
finite subsets of $\mathcal S$.  
Thus $A_{5,\mathcal S}(X)=o(X)$.

Finally, \cite[Theorem~6]{MTT23}, with no local conditions and summed
over the three signatures, gives $c_5X+o(X)$ quintic fields of
absolute discriminant at most $X$.  This proves the density
assertion.
\end{proof}
The analogous result for quartic fields relies on a theorem of Bhargava--Shankar--Wang \cite{BSWglobal} counting $S_4$-quartic fields; the negligible family of
$A_4$-quartic fields is treated separately.
\begin{theorem}
\label{thm:qqext-quartic-parity}
Let $\mathcal S$ be a regular set of rational primes of positive
density.  
Then
\[A_{4,\mathcal S}(X):=
 \#\left\{L/\Q:
 \begin{array}{l}
 [L:\Q]=4,\quad |\Delta_L|\leq X, L \text{ primitive}\\
 v_p(\Delta_L)\equiv0\pmod2
       \text{ for every }p\in\mathcal S
 \end{array}
 \right\}
 =o(X).
\]
In particular, these fields have density $0$ both among
all quartic fields and among the primitive quartic fields, in each case
ordered by absolute discriminant.
\end{theorem}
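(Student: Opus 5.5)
The plan follows the proof of Theorem~\ref{thm:qqext-quintic-parity}, adapted to quartic fields. A primitive quartic field has Galois closure with group $S_4$ or $A_4$: the transitive subgroups of $S_4$ are $S_4$, $A_4$, $D_4$, $C_4$ and $V_4$, and the last three preserve a block system, so they give quartic fields with a quadratic subfield. We therefore split
\[
 A_{4,\mathcal S}(X)\leq A^{S_4}_{4,\mathcal S}(X)+\#\{L:\Gal(\widetilde L/\Q)\simeq A_4,\ |\Delta_L|\leq X\}.
\]
By \cite[Theorem~1.4]{BSTTTZ}, the number of $A_4$-quartic fields with $|\Delta_L|\leq X$ is $O(X^{1-\delta})$ for some $\delta>0$, hence $o(X)$. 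It remains to treat the $S_4$-fields. As in the quintic case, we may discard the finitely many primes $p\leq3$ from $\mathcal S$ without affecting regularity or density.

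Fix a finite $T\subseteq\mathcal S$. For $p\in T$ we have $p>4$, so every local algebra is tame. By \eqref{eq:discA}, parity of $v_p(\Delta_L)$ depends only on the splitting type, and we decompose the even local algebras as a disjoint union of $\Sigma_p(\lambda)$ over $\lambda\in\mathcal T_p^{\mathrm{even}}$. Next we apply \cite[Theorem~2]{BSWglobal}, which counts $S_4$-quartic fields satisfying finitely many local conditions (in particular, conditions at finitely many primes). It gives, summed over signatures,
\[
 N^{S_4}(X;\boldsymbol\lambda)=c_4X\prod_{p\in T}c_p(\lambda_p)+o_T(X),
 \qquad
 c_p(\lambda_p)=\frac{1}{M_4(p)}\sum_{A\in\Sigma_p(\lambda_p)}\frac{p^{-v_p(\Disc(A/\Q_p))}}{\#\operatorname{Aut}_{\Q_p}(A)},
\]
where $c_4>0$ is the total $S_4$ constant. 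Summing over the finitely many tuples and using Lemma~\ref{lem:qqext-local-masses} gives the count $c_4\bigl(\prod_{p\in T}q_4(p)\bigr)X+o_T(X)$ for $S_4$-fields with even $v_p(\Delta_L)$ at all $p\in T$. This count is an upper bound for $A^{S_4}_{4,\mathcal S}(X)$. Taking $\limsup$ and letting $T$ exhaust $\mathcal S$, Lemma~\ref{lem:q4q5} shows the bound tends to zero, so $A^{S_4}_{4,\mathcal S}(X)=o(X)$.

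For the density statements, \cite[Theorem~2]{BSWglobal} with no local conditions gives $\asymp X$ $S_4$-quartic fields. Hence the number of primitive quartic fields, and a fortiori the number of all quartic fields (the $D_4$-fields contribute $\asymp X$ as well), is $\gg X$. Dividing $o(X)$ by either count gives density $0$.

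The step I expect to be the main obstacle is checking that \cite[Theorem~2]{BSWglobal} really outputs local factors normalised exactly by Bhargava's mass $M_4(p)$ with weights $p^{-v_p(\Disc)}/\#\operatorname{Aut}$. Their count may be phrased through quartic rings or ordered differently, so the normalisation has to be matched, as in the quintic case with \cite{MTT23}. A second point is that their result should allow the local specifications $\Sigma_p(\lambda_p)$ at several primes at once. If only finitely many local conditions of a restricted type are permitted, I would instead use a uniform upper-bound sieve over the same finite $T$. Since only an upper bound with the correct product of local densities is needed, that would suffice.
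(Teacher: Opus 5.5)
Your proposal is correct and follows essentially the same route as the paper's proof: split primitive quartics into $S_4$ and $A_4$ fields, bound the $A_4$ fields by \cite[Theorem~1.4]{BSTTTZ}, count $S_4$ fields with even discriminant exponent at a finite $T\subseteq\mathcal S$ via \cite[Theorem~2]{BSWglobal}, and let $T$ exhaust $\mathcal S$ using Lemma~\ref{lem:q4q5}. Your two concerns are handled as follows. The paper imposes ``even discriminant exponent'' directly as a single acceptable local specification at each $p\in T$, so no splitting-type decomposition is needed. It also notes that the unrestricted and restricted local factors in \cite{BSWglobal} are $(1-p^{-1})M_4(p)$ and $(1-p^{-1})M_4^{\mathrm{even}}(p)$, with the $\#\operatorname{Aut}_{\Q}(L)^{-1}$ weighting equal to $1$ for $S_4$-quartics, so their ratio is exactly $q_4(p)$.
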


\begin{proof}
As above, we may delete the primes $p\leq 4$ from $\mathcal S$.
Let $T\subseteq\mathcal S$ be finite.  At every $p\in T$, prescribe
that the degree $4$ \'etale $\Q_p$-algebra have even discriminant
exponent, and impose no local condition elsewhere.  This is an
acceptable collection of local specifications in the sense of
\cite[Theorem~2]{BSWglobal}. 

This theorem counts $S_4$-quartic
fields, rather than arbitrary quartic orders or all quartic fields.
The fields are weighted by
$\#\operatorname{Aut}_{\Q}(L)^{-1}$, which is $1$ for an
$S_4$-quartic (see \cite[p.~2]{BSWglobal}). Denote by
\[
N_{G,T}(X)
:=
\#\left\{
L/\mathbb{Q}:
\begin{array}{l}
[L:\mathbb{Q}]=4,\quad |\Delta_L|\leq X,
\operatorname{Gal}(\widetilde{L}/\mathbb{Q})\simeq G,\\
v_p(\Delta_L)\equiv0\pmod{2}\text{ for every }p\in T
\end{array}
\right\}.
\]
Theorem~2 in \cite{BSWglobal} and Lemma~\ref{lem:qqext-local-masses} then give
\begin{equation}
\label{eq:qqext-quartic-fixed}
 N_{S_4,T}(X)
 =\left(c_{S_4}\prod_{p\in T}q_4(p)+o_T(1)\right)X,
\end{equation}
where $c_{S_4}>0$ is the leading constant for unrestricted
$S_4$-quartic fields.
The unrestricted and restricted finite local factors are
$(1-p^{-1})M_4(p)$ and
$(1-p^{-1})M_4^{\mathrm{even}}(p)$, respectively. Their ratio is
\[
 \frac{(1-p^{-1})M_4^{\mathrm{even}}(p)}
      {(1-p^{-1})M_4(p)}
 =q_4(p).
\]

A quartic field is primitive (i.e. it has no nontrivial
proper subfield) if and only if the Galois group of its normal closure
is $S_4$ or $A_4$.  Indeed, by classical group theory, the transitive subgroups of $S_4$ are
$S_4,A_4,D_4,V_4$, and $C_4$, and only $S_4$ and $A_4$ act
primitively in their natural degree $4$ actions. By
\cite[Theorem~1.4]{BSTTTZ}, we get $N_{A_4}(X):=N_{A_4,\emptyset}(X)=o(X)$. 

For every fixed finite
$T\subseteq\mathcal S$, we have
$
 A_{4,\mathcal S}(X)
 \leq N_{S_4,T}(X)+N_{A_4}(X).
$
It follows from \eqref{eq:qqext-quartic-fixed} that
\[
 \limsup_{X\to\infty}\frac{A_{4,\mathcal S}(X)}X
 \leq c_{S_4}\prod_{p\in T}q_4(p).
\]
As before, Lemma~\ref{lem:q4q5} gives
$A_{4,\mathcal S}(X)=o(X)$.

With no local conditions, \cite[Theorem~2]{BSWglobal} gives
$N_{S_4,\varnothing}(X)\sim c_{S_4}X$, where $c_{S_4}>0$.
Both the family of all quartic fields and the family of primitive
quartic fields contain these fields. Since
$A_{4,\mathcal S}(X)=o(X)$, both asserted density statements follow.
\end{proof}

\subsection{Consequences for covers of elliptic curves}

We start with a result which concerns the fields of definition of quintic new points.

\begin{corollary}
\label{cor:qqext-quintic-new-points}
Suppose that $\psi:C\to E$ has degree $5$ and satisfies the
hypotheses of Theorem~\ref{thm:main}.  Then
\[
 \#\{L/\Q:[L:\Q]=5,\ C(L)_{\news}\neq\varnothing,
              \ |\Delta_L|\leq X\}
 =o(X).
\]
In particular, these fields have density $0$ among all quintic
fields ordered by absolute discriminant.
\end{corollary}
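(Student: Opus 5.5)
Since $5$ is prime, every quintic field $L/\Q$ is primitive: its only subfields have degree dividing $5$, so they are $\Q$ and $L$. Hence the set of quintic fields with $C(L)_{\news}\neq\varnothing$ is exactly the set of primitive degree $5$ fields to which Theorem~\ref{thm:parity} applies. The plan is to combine Theorem~\ref{thm:parity} with the quintic parity count of Theorem~\ref{thm:qqext-quintic-parity}, discarding finitely many exceptional fields.

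First, apply Theorem~\ref{thm:parity} to $\psi$ with $n=5$. This produces a finite set of primes $\Sigma$, a finite set $\mathcal F_{\mathrm{exc}}$ of isomorphism classes of quintic fields, and a set of primes $\mathcal S$ defined as follows. If $B_\psi=\varnothing$, take $\mathcal S=\mathcal P_{\mathrm{all}}\setminus\Sigma$; this is regular of density $1$, since removing finitely many primes does not affect regularity (the removed terms contribute an entire function to $\sum p^{-s}$). If $B_\psi\neq\varnothing$, take $\mathcal S=\mathcal S_0$, which is regular of density $\alpha_\psi>0$. In either case, every quintic $L$ outside $\mathcal F_{\mathrm{exc}}$ with $C(L)_{\news}\neq\varnothing$ satisfies $v_p(\Delta_L)\equiv0\pmod 2$ for all $p\in\mathcal S$.

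Second, bound the count. Since fields are counted up to isomorphism,
\[
\#\{L:[L:\Q]=5,\ C(L)_{\news}\neq\varnothing,\ |\Delta_L|\le X\}\le \#\mathcal F_{\mathrm{exc}}+A_{5,\mathcal S}(X).
\]
Theorem~\ref{thm:qqext-quintic-parity} gives $A_{5,\mathcal S}(X)=o(X)$, so the left-hand side is $o(X)$.

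Third, obtain the density statement. By \cite[Theorem~6]{MTT23}, summed over signatures, the number of quintic fields with $|\Delta_L|\le X$ is asymptotic to $c_5X$ with $c_5>0$. Dividing the bound above by this count gives density $0$.

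The only point needing care, though it is minor, is checking that the set of primes fed into Theorem~\ref{thm:qqext-quintic-parity} really is regular of positive density in both cases, and that the exceptional fields are finite in number up to isomorphism. Both follow directly from Theorem~\ref{thm:parity} and Lemma~\ref{lem:avoid}.
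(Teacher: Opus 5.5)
Your proposal is correct and follows essentially the same route as the paper's proof. Both arguments use that every quintic field is primitive, then apply Theorem~\ref{thm:parity} to get a regular set of primes of positive density (cofinite if $B_\psi=\varnothing$, $\mathcal S_0$ otherwise), discard the finitely many fields in $\mathcal F_{\mathrm{exc}}$, and conclude with Theorem~\ref{thm:qqext-quintic-parity}; your explicit check that a cofinite set of primes is regular of density $1$ is a detail the paper leaves implicit.
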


\begin{proof}
Every degree $5$ field is primitive.  If $B_\psi=\varnothing$,
Theorem~\ref{thm:parity} supplies the cofinite regular set
$\mathcal P_{\mathrm{all}}\setminus\Sigma$, of density $1$.  If
$B_\psi\neq\varnothing$, it supplies a regular set of density
$\alpha_\psi>0$.  Apart from the finitely many fields in
$\mathcal F_{\mathrm{exc}}$, every relevant field has even
discriminant valuation at every prime in the corresponding regular
set.  The result follows from
Theorem~\ref{thm:qqext-quintic-parity}.
\end{proof}

\begin{corollary}
\label{cor:qqext-quartic-new-points}
Suppose that $\psi:C\to E$ has degree $4$ and satisfies the
hypotheses of Theorem~\ref{thm:main}.  Then
\[
 \#\left\{L/\Q:
 \begin{array}{l}
 [L:\Q]=4,\quad L/\Q\text{ primitive},\\
 C(L)_{\news}\neq\varnothing,\quad |\Delta_L|\leq X
 \end{array}
 \right\}
 =o(X).
\]
In particular, these fields have density $0$ both among all quartic
fields and among the primitive quartic fields.
\end{corollary}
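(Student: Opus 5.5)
The plan is to mirror the proof of Corollary~\ref{cor:qqext-quintic-new-points}, replacing Theorem~\ref{thm:qqext-quintic-parity} by Theorem~\ref{thm:qqext-quartic-parity}.

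First, apply Theorem~\ref{thm:parity} to $\psi$ with $n=4$; its hypotheses are exactly those of Theorem~\ref{thm:main}. This produces a finite set $\mathcal F_{\mathrm{exc}}$ of isomorphism classes of quartic fields and a regular set of primes $\mathcal S$, chosen as follows:
\begin{itemize}
\item if $B_\psi=\varnothing$, take $\mathcal S=\mathcal P_{\mathrm{all}}\setminus\Sigma$, which is cofinite and hence regular of density $1$;
\item if $B_\psi\neq\varnothing$, take $\mathcal S=\mathcal S_0$, which is regular of density $\alpha_\psi>0$.
\end{itemize}
In either case, every primitive quartic field $L$ with $C(L)_{\news}\neq\varnothing$ and $L\notin\mathcal F_{\mathrm{exc}}$ satisfies $v_p(\Delta_L)\equiv 0\pmod 2$ for all $p\in\mathcal S$.

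It remains to check that a cofinite set of primes is regular of density $1$. Removing finitely many primes changes $\sum_p p^{-s}$ only by an entire function. Moreover, $\sum_p p^{-s}=\log\frac{1}{s-1}+\theta(s)$ with $\theta$ holomorphic on $\operatorname{Re}(s)\geq1$, by the standard relation between $\log\zeta(s)$ and the prime sum together with the nonvanishing of $\zeta$ on $\operatorname{Re}(s)=1$.

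Hence the set counted in the corollary has size at most $\#\mathcal F_{\mathrm{exc}}+A_{4,\mathcal S}(X)$. By Theorem~\ref{thm:qqext-quartic-parity}, this is $o(X)$. For the density statements, note that \cite[Theorem~2]{BSWglobal} shows that both all quartic fields and primitive quartic fields number $\gg X$. Indeed, both families contain the $S_4$-quartic fields, which number $\sim c_{S_4}X$. The corollary therefore follows.

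There is no real obstacle. The only points to verify are the regularity of $\mathcal S$ in the case $B_\psi=\varnothing$, handled above, and that the exceptional fields contribute only $O(1)$.
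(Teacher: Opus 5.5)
Your proposal is correct and matches the paper's proof essentially step for step: Theorem~\ref{thm:parity} gives the finite exceptional set and a regular set of primes (the cofinite set $\mathcal P_{\mathrm{all}}\setminus\Sigma$, or $\mathcal S_0$ of density $\alpha_\psi$), Theorem~\ref{thm:qqext-quartic-parity} then gives the $o(X)$ bound, and the density statements follow from the $S_4$-count in \cite[Theorem~2]{BSWglobal}. Your explicit check that a cofinite set of primes is regular of density $1$, via $\log\zeta$ and the non-vanishing of $\zeta$ on $\operatorname{Re}(s)=1$, is a detail the paper takes for granted, and your argument for it is fine.
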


\begin{proof}
Theorem~\ref{thm:parity} supplies either a cofinite regular set of
primes of density $1$, or a regular set of density
$\alpha_\psi>0$, on which every relevant primitive quartic field,
apart from finitely many exceptions, has even discriminant valuation.
The result follows from Theorem~\ref{thm:qqext-quartic-parity}.
\end{proof}

\subsection{Proofs of the field density statements}
\label{subsec:field density-proofs}
\begin{proof}[Proof of Theorem~\ref{thm:field density}]
We start with $n=2$.
Every quadratic field is primitive, and the
signed discriminant determines the field. Thus at most two
$\Q$-isomorphism classes of quadratic fields have any given absolute
discriminant (one for each sign). 
Thus
\[
\#\bigl\{L/\mathbb Q:[L:\mathbb Q]=2,\,
C(L)_{\mathrm{new}}\ne\varnothing,\,
|\Delta_L|\le X\bigr\}
\le
2\#\mathcal D_{C,2}(X)
=
o(X).
\]
Since the number of quadratic fields of absolute discriminant at most
$X$ is $\asymp X$, the quadratic fields $L$ for which
$C(L)_{\news}\neq\varnothing$ have density $0$.

Suppose now that $n=3$. Every cubic field is primitive. Define
\[
 \mathcal S
 :=
 \begin{cases}
 \mathcal P_{\mathrm{all}}\setminus\Sigma,
   & B_\psi=\varnothing,\\
 \mathcal S_0,
   & B_\psi\neq\varnothing,
 \end{cases}
\]
where $\Sigma$ and $\mathcal S_0$ are supplied by
Theorem~\ref{thm:parity}. The set $\mathcal S$ is regular of positive
density: its density is $1$ in the first case and $\alpha_\psi$ in
the second. Apart from finitely many exceptions, every cubic field $L$
for which $C(L)_{\news}\neq\varnothing$ satisfies
\[
 v_p(\Delta_L)\equiv0\pmod2
 \qquad(p\in\mathcal S).
\]
By \cite[Corollary~18]{KS}, cubic fields satisfying these parity
conditions have density $0$ among all cubic fields ordered by absolute
discriminant. Adding finitely many exceptional fields does not affect the
density. 
The cases $n=5$ and $n=4$ follow from Corollaries~\ref{cor:qqext-quintic-new-points} and~\ref{cor:qqext-quartic-new-points} respectively.
\end{proof}
\begin{proof}[Proof of Corollary~\ref{cor:P1}]
By Proposition~\ref{prop:KS-dictionary}, the hypotheses say that the action
of $G_\Q$ on $B_\psi(\Qbar)$ contains a derangement (if $g$ is
constant then $B_\psi=\varnothing$). Theorem~\ref{thm:parity} applies
to $\psi$ with $g_0=0$, since $g(C)-1>n(n-1)$, and supplies a
regular set of primes of positive density at which, apart from finitely
many exceptions, every relevant field has even discriminant valuation. The
conclusions follow from Theorems~\ref{thm:qqext-quintic-parity}
and~\ref{thm:qqext-quartic-parity} exactly as in
Corollaries~\ref{cor:qqext-quintic-new-points}
and~\ref{cor:qqext-quartic-new-points}.
\end{proof}

\section{Quadratic points on bielliptic modular curves}
\label{sec:modular-quadratic}

We now prove Corollary~\ref{cor:X0}.  
Recall first that a smooth projective curve $C/\Q$ is called \emph{bielliptic} if it
admits a degree $2$ morphism \[C\longrightarrow E\]
over $\Q$, where $E/\Q$ is an elliptic curve.  By \cite{Bars1999} (see also \cite[Section~4]{Bars}), the modular curves $X_0(N)$ of genus at
least $2$ admitting such a morphism to an elliptic curve of positive
Mordell--Weil rank are precisely the following:
\[
\begin{array}{c|cccccccccc}
g(X_0(N)) & 2 & 3 & 4  & 5 & 6 & 7  & 8 & 11\\ \hline
N & 37 & 43 & 53, 61 & 65 & 79 & 83, 89 & 101 & 131 
\end{array}
\]
For every value in the table one may take
\[
X_0^+(N):=X_0(N)/\langle w_N\rangle
\]
as the elliptic quotient, and $X_0^+(N)(\Q)$ has rank $1$;
see \cite[p. 214]{Galbraith}.  Here $w_N$ is the Atkin--Lehner involution attached to $N$. Under the moduli interpretation, $w_N$ sends a pair $(E,\ker \alpha)$ with 
$
\alpha\colon E \to E'
$
a cyclic
$N$-isogeny, to $(E',\ker \widehat{\alpha})$, where $\widehat \alpha \colon E' \to E$ is 
the dual isogeny. Note that at $N=37$ the curve is, uniquely among the ten levels, also hyperelliptic, which obstructs our method; see Remark~\ref{rmk:37-43}.

For a $D<0$, let $\mathcal O_D$ denote
the imaginary quadratic order of discriminant $D$, and $h(D)$ its class number. Moreover, let $$S_D=\{\,j(E) : \operatorname{End}_{\overline{\mathbb{Q}}}(E)\simeq\mathcal{O}_D\,\}.$$

\begin{lemma}\label{lem:fixed-points}
Let $N>4$ and let $\operatorname{Fix}(w_N)$ denote the fixed locus of $w_N$
on $X_0(N)$. 
Then $(E,\ker \alpha)\mapsto j(E)$ is a $G_{\mathbb{Q}}$-equivariant bijection
\[
  \operatorname{Fix}(w_N)(\overline{\mathbb{Q}})\;\to\;
  \begin{cases}
    S_{-4N}, & N\not\equiv 3\pmod 4,\\[2pt]
    S_{-N}\sqcup S_{-4N}, & N\equiv 3\pmod 4 .
  \end{cases}
\]
\end{lemma}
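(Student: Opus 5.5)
We would work entirely on the level of moduli. Points of $X_0(N)(\Qbar)$ away from the cusps correspond to pairs $(E,\ker\alpha)$ up to isomorphism, with $\alpha:E\to E'$ a cyclic $N$-isogeny. Since $N>4$, no cusp is fixed by $w_N$: $w_N$ swaps the cusps $0$ and $\infty$ when $N$ is prime, and in general permutes cusps of different widths with no fixed point for $N>4$. We would check this quickly via the standard description of cusps. So every fixed point is a noncuspidal pair $(E,\ker\alpha)$.

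\textbf{Fixed points force CM.} A fixed point means there is an isomorphism $u:E'\xrightarrow{\sim}E$ with $u(\ker\widehat\alpha)=\ker\alpha$. Then $\beta:=u\circ\alpha\in\End(E)$ is an endomorphism of degree $N$ with cyclic kernel. Since $\ker\alpha=u(\ker\widehat\alpha)$, we get $\beta^2=u\alpha u\alpha$, which kills $E[N]$, so $\beta^2=\varepsilon N$ for some automorphism $\varepsilon$. Because $N>4$, $\beta$ is not an integer (its kernel is cyclic of order $N$, not $\Z/m\times\Z/m$), so $E$ has CM.

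\textbf{Identifying the order.} Using $\deg\beta=N$, one gets $\beta\bar\beta=N$ and $\beta^2=\varepsilon N=\varepsilon\beta\bar\beta$, hence $\beta=\varepsilon\bar\beta$. Writing $\beta=a+b\omega$, this forces $\beta$ to be "purely imaginary up to units". We would treat $\varepsilon=-1$ generically, giving $\beta=\sqrt{-N}$. The cases $\varepsilon\neq-1$ (extra automorphisms when $j=0,1728$) need a separate check that they give nothing new; this is plausible since then $\beta$ would lie in $\Z[i]$ or $\Z[\zeta_3]$, and that case can be folded into the same count. Cyclicity of $\ker\beta$ forces $\beta$ to be primitive, so it is not divisible by an integer $>1$ in $\End(E)$. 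Therefore $\Z[\sqrt{-N}]\subseteq\End(E)$, and $\End(E)$ is an order containing $\sqrt{-N}$ in which $\sqrt{-N}$ is primitive. Such orders are $\Z[\sqrt{-N}]$ (discriminant $-4N$), and, when $N\equiv3\pmod4$, also $\Z[(1+\sqrt{-N})/2]$ (discriminant $-N$). Primitivity rules out larger overorders of index divisible by a square factor of $N$.

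\textbf{The converse and the bijection.} For $E$ with CM by $\mathcal O_D$, $D\in\{-N,-4N\}$, we take $\ker\sqrt{-N}$. It is cyclic of order $N$, since $\sqrt{-N}$ is primitive in $\mathcal O_D$. The pair $(E,\ker\sqrt{-N})$ is fixed by $w_N$, with $u$ the identification $E/\ker\sqrt{-N}\cong E$ induced by $\sqrt{-N}$. For injectivity of $j$: a given $E$ has $\pm\sqrt{-N}$ as its only primitive elements of norm $N$ that are purely imaginary. This is the step we expect to be the main obstacle. For $D=-4N$ one must show these are the only elements with $\beta=-\bar\beta$ and norm $N$ (true, as $b^2N=N$ gives $b=\pm1$), and units must be handled when $N$ is small; the hypothesis $N>4$ excludes $j=1728,0$ coincidences. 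Thus the fixed point is determined by $E$. Finally, $G_\Q$-equivariance follows because $j$ and the construction $\ker\sqrt{-N}$ commute with Galois conjugation: $\sigma$ carries $\sqrt{-N}\in\End(E)$ to $\pm\sqrt{-N}\in\End(E^\sigma)$, which has the same kernel.
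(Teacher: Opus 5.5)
Your outline is correct, and once you reach $\beta^2=-N$ it coincides with the paper's proof. Both determine the orders in which $\sqrt{-N}$ is primitive, use $\ker\sqrt{-N}$ for the converse, use that $\pm\sqrt{-N}$ are the only trace-zero elements of norm $N$ for injectivity, and deduce equivariance from $j(E^\sigma)=j(E)^\sigma$.

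The real difference is upstream. The paper cites Ogg for two facts: $w_N$ fixes no cusp, and the noncuspidal fixed points are the pairs $(E,\ker\alpha)$ with $\alpha\in\End(E)$, $\ker\alpha$ cyclic of order $N$, and $\alpha^2=-N$. You derive the second fact from the moduli description of $w_N$. Your computation $\beta^2=\varepsilon N$ with $\varepsilon\in\operatorname{Aut}(E)$ is right (it uses $\alpha(E[N])=\ker\widehat\alpha$) and makes the lemma self-contained. But this is exactly the part you only sketch.

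The one genuine hole is the unit case, which you call plausible; it closes in a few lines. Replacing $\beta$ by $\zeta\beta$ with $\zeta\in\End(E)^\times$ leaves $\ker\beta$ unchanged and multiplies $\varepsilon$ by $\zeta^2$.
\begin{itemize}
\item If $\varepsilon=1$, then $\beta=\bar\beta\in\Z$, contradicting cyclicity. This uses only $N>1$, not $N>4$ as you wrote.
\item In $\Z[\zeta_3]$ every unit is $\pm$ a square, so you may take $\varepsilon=\pm1$, hence $\varepsilon=-1$.
\item In $\Z[i]$ the only other class is $\varepsilon=\pm i$. Then $(a+bi)^2=\pm iN$ forces $a=\pm b$, so $\beta=a(1\pm i)$, and primitivity gives $N=2$.
\end{itemize}
Hence for $N>4$ you can always arrange $\beta^2=-N$ without changing the kernel, which is Ogg's criterion.

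Two further steps need actual arguments.

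First, your list of orders is right, but ``primitivity rules out larger overorders'' is an assertion. You must still exclude, for instance, $\Z[\sqrt{-N/4}]$ when $4\mid N$, or $\Z[\zeta_3]$ when $N=27$. The paper's trace argument does this uniformly. With $\mathcal O=\End(E)=\Z[\omega]$ and $m=[\mathcal O:\Z[\sqrt{-N}]]$, write $\sqrt{-N}=a+m\omega$.
\begin{itemize}
\item Primitivity gives $\gcd(a,m)=1$.
\item $\Tr\sqrt{-N}=0$ gives $2a=-m\Tr(\omega)$, so $m\mid 2$.
\item Since $-4N=m^2\Delta(\mathcal O)$, and $m=2$ forces $\Tr(\omega)$ odd, the case $m=2$ occurs only when $N\equiv 3\pmod 4$.
\end{itemize}

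Second, ``cusps of different widths'' only disposes of cusps of denominator $d\neq N/d$. When $N=d^2$, $w_N$ sends $a/d$ to $-1/(ad)$, again of denominator $d$ and of the same width; this is how $w_4$ fixes $1/2$. For a cusp $x/y$ in lowest terms with $d=\gcd(y,N)$, the quantity $x\,(y/d)\bmod \gcd(d,N/d)$ is a $\Gamma_0(N)$-invariant. It takes the values $a$ and $-a$ modulo $d$ on these two cusps, so a fixed cusp forces $d\mid 2$, i.e.\ $N\in\{1,4\}$. Alternatively, simply cite Ogg, as the paper does.
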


\begin{proof}
By \cite[Proposition~3 and p.~454]{Ogg}, $w_N$ fixes no cusp for
$N>4$, and its noncuspidal fixed points are precisely the pairs
$
(E,\ker\alpha),
$
where $\alpha\in\End_{\overline{\mathbb Q}}(E)$ has cyclic kernel of
order $N$ and satisfies
$
\alpha^2=[-N].
$

Let $\mathcal O=\End_{\overline{\mathbb Q}}(E)$ and let
$
m=[\mathcal O:\mathbb Z[\alpha]].
$
Since $\ker\alpha$ is cyclic, $\alpha$ is primitive in $\mathcal O$:
otherwise, if $\alpha=r\gamma$ with $r>1$, then
$E[r]\subseteq\ker\alpha$, contradicting cyclicity. Choose an integral
basis $1,\omega$ of $\mathcal O$ and write
\[
\alpha=a+m\omega.
\]
The primitivity of $\alpha$ gives $\gcd(a,m)=1$, while
$\Tr(\alpha)=0$ gives
\[
2a+m\Tr(\omega)=0.
\]
Hence $m\mid 2$. Since
$
\Delta(\mathbb Z[\alpha])=-4N
       =m^2\Delta(\mathcal O),
$
we obtain
\[
\Delta(\mathcal O)=
\begin{cases}
-4N,&m=1,\\
-N,&m=2.
\end{cases}
\]
The second case can occur only when $N\equiv3\pmod4$: indeed, if
$m=2$, then $\gcd(a,2)=1$, so the trace of $\omega$ is odd and
$\Delta(\mathcal O)\equiv1\pmod4$.

Conversely, if $\mathcal O=\mathcal O_{-4N}$, or if
$N\equiv3\pmod4$ and $\mathcal O=\mathcal O_{-N}$, then
$\sqrt{-N}\in\mathcal O$ has norm $N$ and cyclic kernel, and the
corresponding point of $X_0(N)$ is fixed by $w_N$. Moreover, the
elements of $K=\mathbb Q(\sqrt{-N})$ having trace $0$ and norm $N$
are exactly $\pm\sqrt{-N}$, which have the same kernel. Thus $\ker \alpha$ is
uniquely determined by $E$, and $(E,\ker \alpha)\mapsto j(E)$ gives the stated
bijection.

Finally, this map is defined over $\mathbb Q$, since $j(E^\sigma)=j(E)^\sigma$  for all $\sigma\in G_{\mathbb Q}$. Therefore, the bijection is $G_{\mathbb Q}$-equivariant.
\end{proof}

\begin{lemma}\label{lem:AL-derangement}
Let $N>4$ and let
$\psi_N\colon X_0(N)\to X_0^{+}(N):=X_0(N)/\langle w_N\rangle$ be the
quotient by the Atkin--Lehner involution.  Let $B_N:=B_{\psi_N}$ be the odd
part of the discriminant divisor of $\psi_N$.  Then:
\begin{enumerate}
  \item if $N\not\equiv 3\pmod 4$, the action of $G_{\mathbb{Q}}$ on
        $B_N(\overline{\mathbb{Q}})$ contains a derangement if and only if
        $h(-4N)>1$;
  \item if $N\equiv 3\pmod 4$, the action of $G_{\mathbb{Q}}$ on
        $B_N(\overline{\mathbb{Q}})$ contains a derangement if and only if
        $h(-N)>1$.
\end{enumerate}

\end{lemma}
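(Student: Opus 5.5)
We first identify $B_N(\Qbar)$ with the fixed locus of $w_N$. By Example~\ref{ex:double-cover}, for the double cover $\psi_N$ the odd part of the discriminant divisor equals the branch divisor. Geometrically, a point $b\in X_0^+(N)(\Qbar)$ is a branch point exactly when its fibre consists of one point with $e_P=2$, i.e.\ when the fibre is a fixed point of $w_N$. Hence $\psi_N$ induces a $G_\Q$-equivariant bijection $\operatorname{Fix}(w_N)(\Qbar)\to B_N(\Qbar)$. Composing with Lemma~\ref{lem:fixed-points}, $B_N(\Qbar)$ is $G_\Q$-isomorphic to $S_{-4N}$ when $N\not\equiv3\pmod4$, and to $S_{-N}\sqcup S_{-4N}$ when $N\equiv3\pmod4$.

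Next we analyse the Galois action on the sets $S_D$ using CM theory. For an order $\mathcal O_D$, the $j$-invariants in $S_D$ are the $h(D)$ conjugates of $j(\mathcal O_D)$. They are the roots of the irreducible class polynomial $H_D\in\Z[x]$, so $G_\Q$ acts transitively on $S_D$ and $\#S_D=h(D)$. The set $S_D$ has a rational point if and only if $h(D)=1$.

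In case (1), $B_N(\Qbar)\simeq S_{-4N}$ is a single transitive orbit. If $h(-4N)=1$, the single point is rational, so no element is a derangement. If $h(-4N)>1$, Jordan's theorem (as in Remark~\ref{rem:connected-odd-locus}) gives a derangement of this transitive action of degree $>1$.

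Case (2) needs more care, because there are two orbits and we need one $\sigma$ acting freely on both at once. If $h(-N)=1$, the point of $S_{-N}$ is rational, so no derangement exists. This settles one direction.

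For the converse, assume $h(-N)>1$. Let $K=\Q(\sqrt{-N})$, and let $H_1,H_2$ be the ring class fields of $\mathcal O_{-N}$ and $\mathcal O_{-4N}$, so $H_1\subseteq H_2$. The field $\Q(S_{-N}\cup S_{-4N})$ lies in $H_2$, with $\Gal(H_2/\Q)=\Gal(H_2/K)\rtimes\langle c\rangle$. We look for $\sigma\in\Gal(H_2/K)$ whose image in $\Gal(H_1/K)$ is nontrivial. Such a $\sigma$ acts on $S_{-N}$ through translation by a nontrivial class in $\operatorname{Pic}(\mathcal O_{-N})$, so it is fixed-point free there. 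On $S_{-4N}$ it acts by translation by a class in $\operatorname{Pic}(\mathcal O_{-4N})$, which is free provided that class is nontrivial. Since $\sigma$ is nontrivial on $H_1\subseteq H_2$, it is nontrivial in $\Gal(H_2/K)\simeq\operatorname{Pic}(\mathcal O_{-4N})$. So any lift of a nontrivial element of $\operatorname{Pic}(\mathcal O_{-N})$, which exists because $h(-N)>1$, is a derangement on all of $B_N(\Qbar)$.

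The main point to check is this compatibility. We need the natural surjection $\operatorname{Pic}(\mathcal O_{-4N})\to\operatorname{Pic}(\mathcal O_{-N})$ to match the restriction $\Gal(H_2/K)\to\Gal(H_1/K)$, and each Artin action to be simply transitive on the corresponding $j$-invariants. Both are standard consequences of the main theorem of complex multiplication for ring class fields.
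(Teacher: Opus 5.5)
Your proof is correct and follows the paper's argument. You identify $B_N(\Qbar)$ with $\operatorname{Fix}(w_N)(\Qbar)$, hence with $S_{-4N}$ or $S_{-N}\sqcup S_{-4N}$, and in case (2) you take $\sigma\in\Gal(H_2/K)$ restricting nontrivially to $H_1$, using that each $\Gal(H_D/K)$ acts simply transitively on $S_D$. The only variation is in case (1), where you apply Jordan's theorem to the transitive action on $S_{-4N}$ rather than choosing a nontrivial element of $\Gal(H_2/K)$. Also, the compatibility of Artin maps you flag at the end is not needed: freeness on each $S_D$ uses only the restriction of $\sigma$ to the relevant ring class field and the simple transitivity of that Galois group on $S_D$.
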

 
\begin{proof}
Since $\psi_N$ is the quotient by an involution in characteristic zero,
it is ramified precisely at the fixed points of $w_N$, each with ramification index $2$, so by Lemma~\ref{lem:disc-ramification} each branch point occurs with coefficient $1$ in the discriminant divisor.  Consequently $\psi_N$ induces a
$G_{\mathbb{Q}}$-equivariant bijection
$\operatorname{Fix}(w_N)(\overline{\mathbb{Q}})\to B_N(\overline{\mathbb{Q}})$,
and Lemma~\ref{lem:fixed-points} therefore identifies $B_N(\overline{\Q})$ with
$S_{-4N}$, respectively $S_{-N}\sqcup S_{-4N}$, as $G_{\mathbb{Q}}$-sets.
 
 By the theory of complex multiplication (e.g. \cite[Corollary 10.20]{Cox}), we get that
$\#S_D=h(D)$. Moreover, the field generated by the
elements of $S_D$ over $K=\mathbb{Q}(\sqrt{-N})$ is the ring class field $H_D$, and
$\operatorname{Gal}(H_D/K)\simeq\operatorname{Pic}(\mathcal{O}_D)$ acts
simply transitively on $S_D$; see for example \cite[Corollary 11.37]{Cox}.  In particular the
action of $G_K$ on $S_D$ factors through $\operatorname{Gal}(H_D/K)$, and
{every} non-trivial element of $\operatorname{Gal}(H_D/K)$ acts on
$S_D$ without fixed points.
 
Suppose first that $N\not\equiv 3\pmod4$, so
$B_N(\overline{\mathbb{Q}})\simeq S_{-4N}$.  If $h(-4N)=1$, this is a
$G_{\mathbb{Q}}$-stable singleton, so no derangement exists.  If
$h(-4N)>1$, choose $\tau\ne 1$ in $\operatorname{Gal}(H_{-4N}/K)$ and lift it
to $\tilde\tau\in G_K$; then $\tilde\tau$ is a
derangement.
 
Suppose now that $N\equiv 3\pmod 4$, so
$B_N(\overline{\mathbb{Q}})\simeq S_{-N}\sqcup S_{-4N}$.  If $h(-N)=1$ then
$S_{-N}$ is a $G_{\mathbb{Q}}$-stable singleton and no derangement exists.
Conversely, suppose that $h(-N)>1$.  The corresponding orders satisfy
$\mathcal O_{-4N}\subseteq\mathcal O_{-N}$ with index $2$, so $
  H_{-N}\subseteq H_{-4N}
$
(e.g. \cite[Exercise~9.19]{Cox}). Restriction gives a surjection
\[
\Gal(H_{-4N}/K)
\twoheadrightarrow
\Gal(H_{-N}/K).
\]
Choose
$
\tau\in\Gal(H_{-4N}/K)
$
whose restriction to $H_{-N}$ is nonidentity.  Then $\tau$ is
itself nonidentity, so it fixes no point of $S_{-4N}$.
Its restriction to $H_{-N}$ is also nonidentity, and therefore it
fixes no point of $S_{-N}$.
Choose a lift
$
\widetilde\tau\in G_K
$
once again to get the desired derangement.
\end{proof}

\begin{remark}\label{rem:class-number-one}
The negative discriminants of class number one are
$-3$, $-4$, $-7$, $-8$, $-11$, $-12$, $-16$, $-19$, $-27$, $-28$, $-43$, $-67$, $-163$. If $N>4$ and
$N\not\equiv3\pmod4$, then $-4N\leq-20$ is not in this list, so
$h(-4N)>1$. Consequently Lemma~\ref{lem:AL-derangement} shows that, for
$N>4$, the action of $G_\Q$ on $B_N(\Qbar)$ contains a derangement
unless $N\in\{7,11,19,27,43,67,163\}$. Among the ten levels in the table
above, only $N=43$ is excluded for this reason.
\end{remark}

\begin{proof}[Proof of Corollary~\ref{cor:X0}]
Let
$
N\in\{53,61,65,79,83,89,101,131\}.
$
The quotient
\[
\psi_N:X_0(N)\longrightarrow X_0^+(N)
\]
has degree $2$, and $X_0^+(N)(\Q)$ has rank $1$.  By
Lemma~\ref{lem:AL-derangement} and Remark~\ref{rem:class-number-one}, the
action of $G_\Q$ on $B_N(\Qbar)$ contains a derangement.

Suppose first that
$
N\in\{79,83,89,101,131\}.
$
The corresponding genera are $6,7,7,8,11$, respectively, and hence
$
g(X_0(N))>5.
$
The hypotheses of Theorem~\ref{thm:main} are therefore satisfied.

It remains to consider
$
N\in\{53,61,65\}
$
of genera $4,4,5$.  
Box proves that every quadratic point
$P\in X_0(N)(\overline{\mathbb Q})$ lies above a rational point of
$X_0^+(N)$ under the quotient map
$\psi_N:X_0(N)\to X_0^+(N)$; see
\cite[\S\S~4.2, 4.3 and 4.5]{Box}. Thus
$\mathbb Q(\psi_N(P))=\mathbb Q$, whereas
$[\mathbb Q(P):\mathbb Q]=2$. Consequently,
$P\notin\Xi_2(\psi_N)$, and hence
$
\Xi_2(\psi_N)=\varnothing.
$
Remark~\ref{rem:Xi-finite} therefore allows the proof of
Theorem~\ref{thm:main} to be applied without the numerical genus
hypothesis.

In every case, the parity argument supplies a regular set of primes of
density $\alpha_N>0$ at which the discriminant valuation of every
relevant quadratic field is even, apart from finitely many exceptions.
The quadratic field argument in the proof of
Theorem~\ref{thm:field density}, together with
Theorem~\ref{thm:main}, then gives
\[
\#\{
L/\Q\text{ quadratic}:
|\Delta_L|\leq X,\ 
X_0(N)(L)_{\news}\neq\varnothing
\}
\ll_N
\frac{X}{(\log X)^{\alpha_N}}.
\]
\end{proof}
\begin{remark}\label{rmk:37-43}
As noted above, $X_0(37)$ is both hyperelliptic and bielliptic, with
$X_0^+(37)$ of rank $1$. Write $\iota$ for the hyperelliptic
involution. For a quadratic point $P$, let $\sigma$ denote the nontrivial automorphism of $\Q(P)/\Q$.  Such a point $P$ in a fibre of the hyperelliptic map
satisfies $P^{\sigma}=\iota(P)$, so $\psi_{37}(P)\in X_0^{+}(37)(\Q)$
if and only if $P^\sigma=w_{37}(P)$, that is, if and only if $P$ is
fixed by $\iota w_{37}$. The quotient of $X_0(37)$ by $\iota w_{37}$
is an elliptic curve (of rank $0$; see \cite[p.~340]{Box}), so by
Riemann--Hurwitz $\iota w_{37}$ has exactly two fixed points. Hence all
but finitely many quadratic points of the hyperelliptic pencil lie in
$\Xi_2(\psi_{37})$, which is therefore infinite, and neither
Theorem~\ref{thm:main} nor the variant of Remark~\ref{rem:Xi-finite}
applies. This mirrors the exclusion of the same level from
\cite[Corollary~5]{KS}, where the obstruction is that $J_0(37)(\Q)$ is
infinite; its rank is exactly that of the bielliptic quotient
$X_0^{+}(37)$.

For $X_0(43)$ the derangement hypothesis fails: indeed $h(-43)=1$, so
by Lemmas~\ref{lem:fixed-points} and~\ref{lem:AL-derangement} the single branch point in
$S_{-43}$ is rational and fixed by every element of $G_\Q$.
\end{remark}

The quadratic points on the curves in Corollary~\ref{cor:X0} have
previously been described. Work of Box \cite[\S\S~4.2, 4.3 and 4.5]{Box} treated
the levels $53,61,65$, while Najman and Vukorepa
\cite[\S\S~9.2--9.4, 9.6 and~9.8]{NajmanVukorepa} treated
$79,83,89,101,$ $131$; in each case every quadratic point arises from a
rational point on the Atkin--Lehner quotient. These results give a
geometric description of the quadratic points, but do not address the
distribution of their fields of definition. Moreover, 
Michaud-Jacobs
\cite[Theorem~1]{MichaudJacobs} studies the related question field by
field, determining for each $N$ above and each integer $d$ with
$\lvert d\rvert<100$ whether
$
X_0(N)(\mathbb Q(\sqrt d))\ne X_0(N)(\mathbb Q).
$
Corollary~\ref{cor:X0}, by contrast, gives an asymptotic result: although
these curves have infinitely many quadratic points, the quadratic
fields supporting new points form a density $0$ subset of all
quadratic fields ordered by absolute discriminant.

\begin{remark}[Modular covers of degrees $3$ and $5$]
\label{rem:higher-degree modular}
The LMFDB \cite{LMFDB} contains modular covers of positive rank elliptic
curves to which Theorem~\ref{thm:main} applies, and the hypotheses can be
checked from the data recorded there. Two examples are the covers
\[
\begin{aligned}
\href{https://www.lmfdb.org/ModularCurve/Q/72.216.13.h.1/}{\texttt{72.216.13.h.1}}
&\xrightarrow{\;3\;}
\href{https://www.lmfdb.org/ModularCurve/Q/24.72.1.h.1/}{\texttt{24.72.1.h.1}},\\
\href{https://www.lmfdb.org/ModularCurve/Q/40.480.33.ck.1/}{\texttt{40.480.33.ck.1}}
&\xrightarrow{\;5\;}
\href{https://www.lmfdb.org/ModularCurve/Q/40.96.1.o.2/}{\texttt{40.96.1.o.2}},
\end{aligned}
\]
whose sources have genera $13$ and $33$ and whose targets are elliptic curves (genus $1$ with a rational cusp) of Mordell--Weil rank~$1$: the first target is the elliptic curve
$y^2=x^3+8$
(\href{https://www.lmfdb.org/EllipticCurve/Q/576/e/4}{\texttt{576.e4}}),
the second an elliptic curve of conductor $800$ (isogeny class
\href{https://www.lmfdb.org/EllipticCurve/Q/800/d/}{\texttt{800.d}}).
We first note that cusps pull back to cusps, because they are exactly the points above $j=\infty$. Since source and target have the same number of geometric cusps ($12$ in the first pair and $16$ in the second), each cover is totally ramified above every cusp of its
target. The resulting contributions give $12\cdot(3-1)=24=2\cdot13-2$ and
$16\cdot(5-1)=64=2\cdot33-2$. Thus, by the Riemann--Hurwitz theorem, there
is no other ramification: every nonzero coefficient of the discriminant divisor is $\deg\psi-1$, which is even, and $B_\psi=\varnothing$ in both cases.
Since $13>3^2+1$ and $33>5^2+1$, Theorems~\ref{thm:main}(1)
and~\ref{thm:field density}(1) apply to both covers. The gonality bounds
$\gamma\geq4$ and $\gamma\geq8$ recorded in the LMFDB show, moreover,
that neither source admits a map of the same degree to $\PP^1$, even
over $\Qbar$. A systematic search of the database for further such
covers, and explicit equations for them, would make an interesting
computational project, in the direction of the third project of
\cite[Section~8]{KS}.
\end{remark}

\bibliographystyle{plain}
\bibliography{references}

\end{document}